\documentclass[reqno]{amsproc}
\usepackage[utf8]{inputenc}
\usepackage{cite}
\usepackage{xcolor}
\usepackage{calligra}
\usepackage[T1]{fontenc}
\usepackage{latexsym}
\usepackage[all]{xy}
\usepackage{mathrsfs}
\usepackage{algorithmic}
\usepackage{amsmath,amsfonts,amssymb,amsxtra}
\numberwithin{equation}{section}
\newtheorem{theorem}{\bf Theorem}[section]

\newtheorem{lem}{\bf Lemma}[section]
\newtheorem{cor}{\bf Corollary}[section]
\newtheorem{remark}{\bf Remark}[section]

\newcommand\dv{\mathrm{div}}
\newcommand\tr{\mathrm{tr}}

\usepackage{hyperref}

\begin{document}

\title[Estimates of eigenvalues of elliptical differential problems]{Estimates of eigenvalues of elliptical differential problems in divergence form}

\author{Marcio C. Ara\'ujo Filho$^1$}
\author{Juliana F.R. Miranda$^2$}
\author{Cristiano S. Silva$^3$}

\address{$^1$Departamento de Matemática, Universidade Federal de Rondônia, Campus Ji-Paraná, R. Rio Amazonas, 351, Jardim dos Migrantes, 76900-726 Ji-Paraná, Rondônia, Brazil}
\address{$^{2}$Departamento de Matem\'atica, Universidade Federal do Amazonas, Av. General Rodrigo Oct\'avio, 6200, 69080-900 Manaus, Amazonas, Brazil.}
\address{$^{3}$Departamento de Matem\'atica, Universidade Federal de Roraima, Av. Ene Garcez, 2413, 69310-000 Boa Vista, Roraima, Brazil.}

\email{$^1$marcio.araujo@unir.br}
\email{$^2$jfrmiranda@ufam.edu.br}
\email{$^3$cristiano.silva@ufrr.br}

\urladdr{$^1$https://dmejp.unir.br}
\urladdr{$^{2}$http://dmice.ufam.edu.br}
\urladdr{$^{3}$http://ufrr.br/matematicalicenciatura/}

\keywords{Eigenvalue problems, Estimate of eigenvalues, Elliptic differential system, Fourth-order problem.}

\subjclass[2010]{Primary 47A75; Secondary 47F05, 35P15, 53C24, 53C25}

\begin{abstract}

In this paper, we compute universal estimates of eigenvalues for a class of coupled systems of elliptic differential equations in divergence form on a bounded domain in Euclidean space, which includes the well-known Lamé and the Laplacian operator. Furthermore, we also give universal estimates of eigenvalues for a class of fourth-order elliptic differential problems in divergence form, which encloses the well-known bi-Laplacian operator. In both cases, as applications, we obtain the gap between consecutive eigenvalues as well as an upper bound for each eigenvalue. 

\end{abstract}

\maketitle

\section{Introduction}

Let $(M^n,\langle ,\rangle )$ be an $n$-dimensional complete Riemannian manifold and $\Omega\subset M$ be a bounded and connected domain with smooth boundary $\partial \Omega$. Let us consider a symmetric positive definite $(1, 1)$-tensor $T$ on $M$ and a real-valued function $\eta\in C^2(M)$, so that we define a second-order elliptic differential
operator $\mathscr{L}$ in the $(\eta, T)$-divergence form as follows:
\begin{equation}\label{EQ2.1}
    \mathscr{L}f :=\dv_\eta (T(\nabla f)) = \dv(T(\nabla f)) - \langle \nabla \eta, T(\nabla f) \rangle,
\end{equation}
where div stands for the divergence operator and $\nabla$ is the gradient operator. In these conditions, there are positive real numbers $\varepsilon$ and $\delta$ such that $\varepsilon I \leq T \leq \delta I$ where $I$ is the identity $(1, 1)$-tensor.

Elliptic differential operators in divergence form play a crucial role in various physical theories and applications. The eigenvalues of differential operators can provide significant insights into the physical and geometric properties of a domain. One of the reasons they are so interesting is that they involve many fields of mathematics such as spectral theory, Riemannian geometry, and partial differential equations. Over the years, many mathematicians have investigated properties of the spectrum of differential operators, some of which will be mentioned at the appropriate time.

It is worth noting that geometric motivations for studying the operator $\mathscr{L}$ in the $(\eta,T)$-divergence form on bounded domains of Riemannian manifolds were provided in the work of Gomes and Miranda~\cite[Section~2]{GomesMiranda}. They demonstrated that this operator arises as the trace of a $(1,1)$-tensor on a Riemannian manifold $M$, and established a Bochner-type formula for it. A remarkable point is that Eq.~ $(2.3)$ in \cite{GomesMiranda} connects $\mathscr{L}$ with the operator $\square$ introduced by Cheng and Yau in~\cite{ChengYau}. In particular, when $T$ is chosen as the Einstein tensor, this relation can open the door to potential applications in physics.

In Section~\ref{sec_1} we compute eigenvalue estimates for a coupled system of second-order elliptic differential equations in divergence form and  in Section~\ref{sec_2} a fourth-order elliptic differential problem in divergence form.

\subsection{A coupled system of second-order elliptic differential equations}\label{sec_1}

In this section, let us consider $(\mathbb{R}^n,\cdot )$ the $n$-dimensional Euclidean space with its canonical metric, we address the eigenvalue problem for an operator which is a second-order perturbation of $\mathscr{L}$. More precisely, we compute universal estimates of the eigenvalues of the coupled system of second-order elliptic differential equations, namely:
\begin{equation}\label{problem1}
    \left\{\begin{array}{ccccc} 
    \mathscr{L}  {\bf u} + \alpha \nabla(\dv_\eta {\bf u}) &=& -\sigma {\bf u} & \mbox{in } & \Omega,\\
     {\bf u}&=&0 & \mbox{on} & \partial\Omega, 
    \end{array} 
    \right.
\end{equation}
where ${\bf u}=(u^1, u^2, \ldots, u^n)$ is a vector-valued function from $\Omega$ to $\mathbb{R}^n$, the constant $\alpha$ is non-negative and $\mathscr{L}{\bf u}=(\mathscr{L}u^1, \mathscr{L}u^2, \ldots,\mathscr{L} u^n)$. 

According to presented in Araújo Filho and Gomes~\cite{AF-G}, we can see that $\mathscr{L}+\alpha\nabla\dv_\eta$ is a formally self-adjoint operator in the Hilbert space $\mathbb{L}^2(\Omega,dm)$ of all vector-valued functions that vanish on $\partial \Omega$ in the sense of the trace, where $dm=e^{-\eta}d\Omega$. Moreover, the eigenvalue problem~\eqref{problem1} has a real and discrete spectrum 
\begin{equation}\label{equationn1.3}
    0 < \sigma_1 \leq \sigma_2 \leq \cdots \leq \sigma_k \leq \cdots\to\infty,
\end{equation}
where each $\sigma_i$ is repeated according to its multiplicity.

Some cases can be obtained from Problem~\ref{problem1} when $T$ is a special type of tensor. For example, when $T$ is divergence-free, we obtain Problem~\ref{problem1-3} and when $T$ is the identity tensor, we get \ref{problem4}. In addition, if $\eta$ is a constant function, we obtain the problem for Cheng-Yau and Laplacian operators, respectively. In the next three paragraphs, we make brief discussions about them.

When $T$ is divergence-free the operator $\mathscr{L}$ can be decomposed as follows
\begin{equation}\label{DCYOp}
    \mathscr{L}f = \square f - \nabla \eta\cdot T(\nabla f),
\end{equation}
where $\square f = \tr{(\nabla^2f \circ T)}=\langle \nabla^2 f, T\rangle$ is the operator introduced by Cheng and Yau~\cite{ChengYau}, known as the Cheng-Yau operator, that arises from the study of complete hypersurfaces of constant scalar curvature in space forms (divergence-free tensors also often arise from physical facts, see e.g. Serre~\cite{Serre}). Eq.~\eqref{DCYOp} is a first-order perturbation of the Cheng-Yau operator, and it defines the drifted Cheng-Yau operator which we denote by $\square_\eta$ with a drifting function $\eta$. In this case, the coupled system of second-order elliptic differential equations \eqref{problem1} is rewritten as
\begin{equation}\label{problem1-3}
    \left\{\begin{array}{ccccc} 
    \square_\eta  {\bf u} + \alpha \nabla(\dv_\eta {\bf u}) &=& -\sigma {\bf u} & \mbox{in } & \Omega,\\
     {\bf u}&=&0 & \mbox{on} & \partial\Omega, 
    \end{array} 
    \right.
\end{equation}
where ${\bf u}=(u^1, u^2, \ldots, u^n)$ is a vector-valued function from $\Omega$ to $\mathbb{R}^n$, the constant $\alpha$ is non-negative and $\square_\eta{\bf u}:=(\square_\eta u^1, \square_\eta u^2, \ldots,\square_\eta  u^n)$. Problem~\eqref{problem1-3} was studied by first author and Gomes~\cite{AF-G} which obtained some bounds of eigenvalues for it.

Also, if $T=I$ and $\eta$ is not necessarily constant, then Problem~\eqref{problem1} becomes
\begin{equation}\label{problem4}
    \left\{ \begin{array}{ccccc} 
    \Delta_\eta  {\bf u} + \alpha \nabla \mbox{(div}_\eta{\bf u}) &=& -\sigma {\bf u} & \mbox{in} &\Omega,  \\
     {\bf u}&=&0 & \mbox{on} & \partial \Omega,
    \end{array} 
    \right.
\end{equation}
where $\Delta_{\eta}{\bf u}=(\Delta_\eta u^1,\ldots,\Delta_\eta u^n)$ and $\Delta_\eta = \dv_\eta \nabla$ is the drifted Laplacian operator on $C^\infty(\Omega)$. Du and Bezerra~\cite{DuBezerra} established some bounds of the eigenvalues for it, and the first author and Gomes~\cite{AF-G} obtained better bounds than their results. 

Additionally when $\eta$ is a constant function in Problem \eqref{problem4} the operator becomes $\Delta+\alpha\nabla \dv$ known as Lamé's operator. In the $3$-dimensional case it shows up in the elasticity theory  and $\alpha$ is determined by the positive constants of Lamé, so the assumption $\alpha \geq 0$ is justified, the interested reader can consult Pleijel~\cite{Pleijel} or Kawohl and Sweers~\cite{KawohlSweers}. It is important to emphasize here the works of Levine and Protter~\cite{LevineProtter}, Livitin and Parnovski~\cite{LevitinParnovski}, Hook~\cite{Hook}, Cheng and Yang~\cite{ChengYang} and Chen et al.~\cite{CCWX} in which we can find some estimates of the eigenvalues of this case.

Our first result is a universal quadratic estimate for the eigenvalues of Problem~\eqref{problem1}, which will play a key role to obtain some of our estimates.
\begin{theorem}\label{theorem1.1}
Let $\Omega \subset \mathbb{R}^n$ be a  bounded domain and ${\bf u}_i$ be a normalized eigenfunction corresponding to $i$-th eigenvalue $\sigma_i$ of Problem~\eqref{problem1}. Then, for any positive integer $k$, we get
\begin{equation*}
    \sum_{i=1}^k(\sigma_{k+1}-\sigma_i)^2 \leq \frac{4\delta(n\delta+\alpha)}{n^2\varepsilon^2}\sum_{i=1}^k(\sigma_{k+1}-\sigma_i)\Big(\sigma_i - \alpha \|\dv_\eta{\bf u}_i\|^2 + \frac{T_0^2+4C_0}{4\delta}\Big),
\end{equation*}
where
\begin{equation*}
    C_0=\sup_\Omega \left\{\frac{1}{2}\dv \Big( T\big(T(\nabla \eta)-\tr(\nabla T)\big)\Big) - \frac{1}{4}|T(\nabla \eta)|^2\right\},
\end{equation*}
$T_0=\sup_{\Omega}|\tr(\nabla T)|$ and $\eta_0=\sup_{\Omega}|\nabla \eta|$.
\end{theorem}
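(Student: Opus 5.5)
We would follow the standard Payne--Pólya--Weinberger/Hile--Protter/Yang approach via an abstract commutator inequality. The trial functions are $\phi_{ij}=x_j{\bf u}_i-\sum_{l=1}^k a_{ijl}{\bf u}_l$ for $j=1,\dots,n$ and $i=1,\dots,k$, where $a_{ijl}=\int_\Omega x_j\,{\bf u}_i\cdot{\bf u}_l\,dm$ is chosen so that $\phi_{ij}\perp{\bf u}_1,\dots,{\bf u}_k$ in $\mathbb{L}^2(\Omega,dm)$. Writing $\mathbb{L}:=\mathscr{L}+\alpha\nabla\dv_\eta$, the Rayleigh--Ritz inequality gives $\sigma_{k+1}\|\phi_{ij}\|^2\le-\int\phi_{ij}\cdot\mathbb{L}\phi_{ij}\,dm$. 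Combining this with the commutator identity $\mathbb{L}(x_j{\bf u}_i)=x_j\mathbb{L}{\bf u}_i+[\mathbb{L},x_j]{\bf u}_i$ and the usual Cauchy--Schwarz argument with a free parameter (as in Cheng--Yang), we obtain for each $j$
\begin{equation*}
\sum_{i=1}^k(\sigma_{k+1}-\sigma_i)^2 \Big(-2\int x_j{\bf u}_i\cdot[\mathbb{L},x_j]{\bf u}_i\,dm\Big)\le \gamma\sum_i(\sigma_{k+1}-\sigma_i)^2 \|\cdots\|^2+\frac1\gamma\sum_i(\sigma_{k+1}-\sigma_i)\|[\mathbb{L},x_j]{\bf u}_i\|^2 .
\end{equation*}
A more convenient form is the version with a first-order operator: set $w_{ij}=\tfrac12[\mathbb{L},x_j]{\bf u}_i$, and the antisymmetric terms cancel after summing over $i$ and $l$.

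The core computations, in order, are as follows. Denote by $e_j$ the coordinate vectors.

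First, $[\mathscr{L},x_j]f=2\langle T(e_j),\nabla f\rangle+f\,\mathscr{L}x_j$, where $\mathscr{L}x_j=\dv_\eta(Te_j)=\langle \tr(\nabla T)-T\nabla\eta, e_j\rangle$ (using symmetry of $T$). The $\alpha$-part is $[\alpha\nabla\dv_\eta,x_j]{\bf u}=\alpha\big(\nabla u^j+e_j\dv_\eta{\bf u}\big)$.

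Second, the "lower" quantity is $-\int x_j{\bf u}_i\cdot[\mathbb{L},x_j]{\bf u}_i\,dm$. Integrating by parts, this equals $\int\big(\langle Te_j,e_j\rangle|{\bf u}_i|^2+\alpha(u_i^j)^2\big)\,dm$. Summing over $j$ gives $\int(\tr T)|{\bf u}_i|^2+\alpha\ge n\varepsilon+\alpha$. I would use $\ge n\varepsilon$, since the constant we aim for has $n^2\varepsilon^2$ in the denominator and $n\delta+\alpha$ in the numerator.

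Third, the "upper" quantity is $\sum_j\|[\mathbb{L},x_j]{\bf u}_i\|^2$. We must bound it by $4(n\delta+\alpha)/n\cdot\big(\sigma_i-\alpha\|\dv_\eta{\bf u}_i\|^2+(T_0^2+4C_0)/(4\delta)\big)\cdot$const. It is cleaner to use $\sum_j\int 2\,[\cdot]\cdots$ in the form $\int(\sigma_{k+1}-\sigma_i)\|2\langle Te_j,\nabla{\bf u}_i\rangle+{\bf u}_i\mathscr{L}x_j+\dots\|^2$. Expanding, we bound $\sum_j|T\nabla u^m_i\cdot e_j|^2\le\delta\,\langle T\nabla u_i^m,\nabla u_i^m\rangle$. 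We also have $\int\sum_m\langle T\nabla u^m_i,\nabla u^m_i\rangle=\sigma_i-\alpha\|\dv_\eta{\bf u}_i\|^2$ up to the zeroth-order drift terms. The cross terms $\int u^m\langle T\nabla u^m,\tr\nabla T-T\nabla\eta\rangle$ are integrated by parts into $-\tfrac12\int|{\bf u}|^2\dv_\eta\big(T(\tr\nabla T-T\nabla\eta)\big)$. Together with $|T\nabla\eta|^2$ and $|\tr\nabla T|^2$, these assemble into $C_0$ and $T_0^2$ after completing a square. This is exactly where the specific form of $C_0$ should arise.

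I expect this last step to be the main obstacle: organizing the zeroth-order terms so that exactly $\tfrac{T_0^2+4C_0}{4\delta}$ appears, and handling the $\alpha$ terms. For the latter, $\sum_j\|\nabla u^j+e_j\dv_\eta{\bf u}\|^2$ has to combine with the $T$-terms to produce the factor $(n\delta+\alpha)/n$ while subtracting $\alpha\|\dv_\eta{\bf u}_i\|^2$. I would first try rotating coordinates or averaging over $j$, which introduces the $1/n$. After these bounds, we sum over $j$, divide by the lower bound $n\varepsilon$, and optimize the free parameter. This yields the factor $4\delta(n\delta+\alpha)/(n^2\varepsilon^2)$ and the stated inequality.
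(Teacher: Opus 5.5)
You handle the zeroth-order terms exactly as the paper does in Lemma~\ref{lemma2.3} and the proof of Theorem~\ref{theorem1.1}. You integrate $\int{\bf u}_i\cdot T(V,\nabla{\bf u}_i)\,dm$, with $V=\tr(\nabla T)-T(\nabla\eta)$, by parts to produce $T_0^2$ and $C_0$, and you use $\|T(\nabla{\bf u}_i)\|^2\le\delta\int T(\nabla{\bf u}_i,\nabla{\bf u}_i)\,dm$. The identity $\int T(\nabla{\bf u}_i,\nabla{\bf u}_i)\,dm=\sigma_i-\alpha\|\dv_\eta{\bf u}_i\|^2$ is exact, not ``up to drift terms''.

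The gap is the step that is supposed to produce $\frac{4(n\delta+\alpha)}{n^2\varepsilon^2}$. You run the Yang/Harrell--Stubbe argument with the full commutator $[\mathbb{L},x_j]$, so you must bound $\sum_j\|[\mathbb{L},x_j]{\bf u}_i\|^2$, which is quadratic in $\alpha$. Already for $T=I$ and $\eta$ constant,
\[
\sum_{j=1}^n\big\|2\partial_j{\bf u}+\alpha(\nabla u^j+e_j\dv{\bf u})\big\|^2=(4+\alpha^2)\|\nabla{\bf u}\|^2+\big(8\alpha+(n+2)\alpha^2\big)\|\dv{\bf u}\|^2 .
\]
With your lower bound $n\varepsilon$ you would need this to be at most $\frac{4(n+\alpha)}{n}\|\nabla{\bf u}_i\|^2$. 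That is false for every eigenfunction once $\alpha>4/n$. Because the display is an exact identity, rotating coordinates or averaging over $j$ cannot fix it. Choosing ``$\ge n\varepsilon$'' just to match the target constant does not help either.

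The missing idea is to split the commutator. This is the content of the cited Lemma~\ref{lema2}, and the same mechanism appears in the paper's proof of Theorem~\ref{theorem1.2}.

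Apply Cauchy--Schwarz only to $G_j:=[\mathscr{L},x_j]$. Set $a_{ijl}=\int x_j{\bf u}_i\cdot{\bf u}_l\,dm$ and $g_{ijl}=\int{\bf u}_l\cdot G_j{\bf u}_i\,dm$. Then
\[
-\int\phi_{ij}\cdot G_j{\bf u}_i\,dm=\int T(e_j,e_j)|{\bf u}_i|^2dm+\sum_{l=1}^k a_{ijl}g_{ijl}.
\]
Let the $\alpha$-part enter only through Rayleigh--Ritz for the full operator,
\[
(\sigma_{k+1}-\sigma_i)\|\phi_{ij}\|^2\le\int\big(T(e_j,e_j)|{\bf u}_i|^2+\alpha (u_i^j)^2\big)dm+\sum_{l=1}^k(\sigma_i-\sigma_l)a_{ijl}^2 ,
\]
and use this only to bound the term $B(\sigma_{k+1}-\sigma_i)^3\|\phi_{ij}\|^2$.

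Now $g_{ijl}$ is no longer $(\sigma_i-\sigma_l)a_{ijl}$, so the usual cancellation is not automatic. Still, $a$ is symmetric and $g$ is antisymmetric in $(i,l)$. After summing over $i$, the mixed terms therefore equal
\[
-\frac12\sum_{i,l}(2\sigma_{k+1}-\sigma_i-\sigma_l)\Big(\sqrt{B}(\sigma_l-\sigma_i)a_{ijl}+\tfrac{1}{2\sqrt{B}}g_{ijl}\Big)^2\le0 .
\]
Summing over $j$ gives
\[
\big(n\varepsilon-B(n\delta+\alpha)\big)\sum_i(\sigma_{k+1}-\sigma_i)^2\le\frac1B\sum_i(\sigma_{k+1}-\sigma_i)\sum_j\|\tfrac12 G_j{\bf u}_i\|^2 .
\]
Taking $B=\frac{n\varepsilon}{2(n\delta+\alpha)}$ yields exactly Lemma~\ref{lema2}. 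Its brace is $\sum_j\|\frac12G_j{\bf u}_i\|^2$, which contains no $\alpha$, and this is why $\alpha$ appears only linearly. From there your zeroth-order computation finishes the proof as in the paper.
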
 

The Theorem~\ref{theorem1.1} can be seen as an extension of the Yang's estimate of the eigenvalues of the Laplacian on real-valued functions. Its proof is a convenient algebraic manipulation of \cite[Theorem 1.1]{AF-G} obtained by the first author and Gomes. 

Considering the sum of lower order eigenvalues in terms of the first eigenvalue and its corresponding eigenfunction, we prove the following estimate.

\begin{theorem}\label{theorem1.2}
Let $\Omega \subset \mathbb{R}^n$ be a bounded domain, $\sigma_i$ be the $i$-th eigenvalue of Problem~\eqref{problem1}, for $i=1,\ldots,n$, and ${\bf u}_1$ be a normalized eigenfunction corresponding to the first eigenvalue. Then, we get
\begin{equation*}
    \sum_{i=1}^n(\sigma_{i+1} - \sigma_1) \leq \frac{4\delta(\delta+\alpha)}{\varepsilon^2}(\sigma_1+D_1),
\end{equation*}
where $D_1= -\alpha\|\dv_\eta {\bf u}_1\|^2 + \frac{T_0+4C_0}{4\delta}$.
\end{theorem}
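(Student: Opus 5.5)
This is a Payne--Pólya--Weinberger / Ashbaugh--Benguria type bound for the first $n$ gaps, built from the first eigenfunction only. I would take test functions $\boldsymbol{\phi}_j = x_j {\bf u}_1$ ($j=1,\ldots,n$), or more precisely, after an orthogonal change of coordinates, $\boldsymbol{\phi}_j = (x_j - a_j){\bf u}_1$. Here the translations $a_j$ and the rotation are chosen, via the standard Gram--Schmidt / QR-decomposition argument, so that
\[
\int_\Omega x_j\, {\bf u}_1\cdot {\bf u}_{i+1}\, dm = 0 \quad\text{for } 1\le i<j\le n,
\]
and $\boldsymbol{\phi}_j\perp {\bf u}_1$. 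Then $\boldsymbol{\phi}_j$ is orthogonal to ${\bf u}_1,\ldots,{\bf u}_j$. The Rayleigh--Ritz characterization for the self-adjoint operator $-(\mathscr L+\alpha\nabla\dv_\eta)$ gives
\[
(\sigma_{j+1}-\sigma_1)\|\boldsymbol{\phi}_j\|^2 \le \int_\Omega \boldsymbol{\phi}_j\cdot\big(-\mathscr L-\alpha\nabla\dv_\eta-\sigma_1\big)\boldsymbol{\phi}_j\,dm .
\]
The right-hand side reduces by integration by parts to
\[
\int_\Omega \Big(\langle T(\nabla x_j),\nabla x_j\rangle |{\bf u}_1|^2 + \alpha (u_1^j)^2\Big)dm ,
\]
so it is bounded by $\delta + \alpha\|u_1^j\|^2$.

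Next I would use the commutator/Cauchy--Schwarz step exactly as in the proof of Theorem~\ref{theorem1.1}. Write
\[
-2\int_\Omega x_j\,{\bf u}_1\cdot\big(T(\nabla x_j)\cdot\nabla\big){\bf u}_1\,dm \;+\;\text{lower order terms} \;=\; \int_\Omega \langle T\nabla x_j,\nabla x_j\rangle|{\bf u}_1|^2\,dm + \cdots .
\]
Multiplying by $(\sigma_{j+1}-\sigma_1)$ and using Cauchy--Schwarz with a weight $\gamma_j>0$ gives
\[
(\sigma_{j+1}-\sigma_1)\Big(\int_\Omega \langle T\nabla x_j,\nabla x_j\rangle|{\bf u}_1|^2\Big)^2 \lesssim \gamma_j(\sigma_{j+1}-\sigma_1)^2\|\boldsymbol{\phi}_j\|^2 + \gamma_j^{-1}\int_\Omega \big|T(\nabla x_j)\cdot\nabla{\bf u}_1 + \ldots\big|^2 .
\]
Summing over $j$ and using $\varepsilon \le \langle T\nabla x_j,\nabla x_j\rangle$, $\sum_j |T(e_j)\cdot\nabla{\bf u}_1|^2\le \delta\langle T\nabla {\bf u}_1,\nabla{\bf u}_1\rangle$, and the energy identity for ${\bf u}_1$, I get
\[
\sum_{j}\int_\Omega |T(\nabla x_j)\cdot\nabla {\bf u}_1 + \tfrac12(\ldots){\bf u}_1|^2 \le \delta\Big(\sigma_1-\alpha\|\dv_\eta{\bf u}_1\|^2 + \tfrac{T_0+4C_0}{4\delta}\Big) = \delta(\sigma_1+D_1).
\]
Here the drift and $\tr(\nabla T)$ terms are absorbed via $C_0$ and $T_0$, as in Theorem~\ref{theorem1.1}.

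The main obstacle is making the $j$-dependent factors $\delta+\alpha\|u_1^j\|^2$ collapse to the stated constant $\delta(\delta+\alpha)/\varepsilon^2$ without the $1/n$ factors of Theorem~\ref{theorem1.1}. I would bound $\alpha\|u_1^j\|^2\le\alpha$ crudely (since $\|{\bf u}_1\|=1$). Then I choose $\gamma_j$ so that each of the $n$ inequalities reads
\[
\sigma_{j+1}-\sigma_1 \le \frac{4(\delta+\alpha)}{\varepsilon^2}\,\Big\| T(\nabla x_j)\cdot\nabla{\bf u}_1 + \ldots \Big\|^2 ,
\]
and sum over $j$, using the previous bound $\delta(\sigma_1+D_1)$. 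Alternatively, one can follow the Chen--Zheng/Sun--Cheng--Yang reordering trick for $\sigma_{j+1}-\sigma_1$ to avoid needing monotone $\gamma_j$. If the orthogonality via rotation clashes with $T$ not being rotation invariant, this is harmless, since only the bounds $\varepsilon I\le T\le\delta I$ are used.
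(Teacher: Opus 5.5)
Your proposal is correct and is essentially the paper's proof. You use the same QR-rotated test fields ${\bf w}_i=(y_i-a_i){\bf u}_1$, the same Rayleigh-quotient bound $(\sigma_{i+1}-\sigma_1)\|{\bf w}_i\|^2\le\int_\Omega|{\bf u}_1|^2T(\nabla y_i,\nabla y_i)\,dm+\alpha\int_\Omega|\nabla y_i\cdot{\bf u}_1|^2dm\le\delta+\alpha$, the same weighted Cauchy--Schwarz step, and the same Lemma~\ref{lemma2.3}-type estimate of $\sum_i\|\tfrac12{\bf u}_1\mathscr{L}y_i+T(\nabla y_i,\nabla{\bf u}_1)\|^2$.

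The only difference is cosmetic. You fix the weight per index: the constant choice $\gamma=\varepsilon/(2(\delta+\alpha))$ gives exactly your inequality $\sigma_{j+1}-\sigma_1\le\frac{4(\delta+\alpha)}{\varepsilon^2}\|\tfrac12{\bf u}_1\mathscr{L}y_j+T(\nabla y_j,\nabla{\bf u}_1)\|^2$. The paper instead sums first and then optimizes a single $B$, with the same final constant.

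Two small corrections. The left-hand side of your Cauchy--Schwarz display should not be squared. Also, the argument actually produces $T_0^2$ (as in Theorem~\ref{theorem1.1}) where the statement writes $T_0$; the paper has the same typo.
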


The Theorems~\ref{theorem1.1} and~\ref{theorem1.2} were obtained analogously to Theorems 1.1 and 1.2 in Araújo Filho and Gomes~\cite{AF-G}, however, with a more suitable configuration to obtain the eigenvalue estimates of Problem~\ref{problem1} for the more general case, fact that the mentioned authors achieved only for the case in which $T$ is divergence-free, see \cite[Corollaries~6.1 and 6.2]{AF-G}.

From Theorem~\ref{theorem1.1} and following the steps of the proof of \cite[Theorem 3]{GomesMiranda}, we obtain the next result.
\begin{cor}\label{Cor_1}
Under the same setup as in Theorem~\ref{theorem1.1}, and by defining $D_0=-\alpha \min_{j=1, \ldots, k} \|\dv_\eta{\bf u}_j\|^2 + \frac{T_0+4C_0}{4\delta} $, we have
\begin{equation*}
    \sum_{i=1}^k(\sigma_{k+1}-\sigma_i)^2 \leq \frac{4\delta(n\delta+\alpha)}{n^2\varepsilon^2}\sum_{i=1}^k(\sigma_{k+1}-\sigma_i)(\sigma_i +D_0),
\end{equation*}
and consequently, we get 
\begin{align*}
    \sigma_{k+1} + D_0 \leq& \Big(1+\frac{2\delta(n\delta+\alpha)}{\varepsilon^2n^2}\Big)\frac{1}{k}\sum_{i=1}^k(\sigma_i + D_0) + \Big[\Big( \frac{2\delta(n\delta+\alpha)}{\varepsilon^2n^2}\frac{1}{k}\sum_{i=1}^k(\sigma_i + D_0)\Big)^2 \nonumber \\
   &-\Big(1+\frac{4\delta(n\delta+\alpha)}{\varepsilon^2n^2}\Big) \frac{1}{k}\sum_{j=1}^k\Big(\sigma_j-\frac{1}{k}\sum_{i=1}^k\sigma_i\Big)^2 \Big]^{\frac{1}{2}}
\end{align*}
and
\begin{eqnarray*}
\nonumber\sigma_{k+1} -\sigma_k &\leq& 2\Big[\Big( \frac{2\delta(n\delta+\alpha)}{\varepsilon^2n^2}\frac{1}{k}\sum_{i=1}^k(\sigma_i + D_0)\Big)^2\\
&&-\Big(1+\frac{4\delta(n\delta+\alpha)}{\varepsilon^2n^2}\Big) \frac{1}{k}\sum_{j=1}^k\Big(\sigma_j-\frac{1}{k}\sum_{i=1}^k\sigma_i\Big)^2 \Big]^{\frac{1}{2}}.
\end{eqnarray*}
\end{cor}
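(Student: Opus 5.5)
The corollary has three parts, and I would derive them in order: first the Yang-type inequality with $D_0$, then the bound on $\sigma_{k+1}$ by solving a quadratic, then the gap bound.

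\textbf{Step 1: the Yang-type inequality.} This follows directly from Theorem~\ref{theorem1.1}. Since $\alpha\ge 0$ and $\sigma_{k+1}-\sigma_i\ge 0$ for $i\le k$, we have
\[
-\alpha\|\dv_\eta{\bf u}_i\|^2\le -\alpha\min_{j=1,\ldots,k}\|\dv_\eta{\bf u}_j\|^2 .
\]
Hence each summand on the right of Theorem~\ref{theorem1.1} is bounded by $(\sigma_{k+1}-\sigma_i)(\sigma_i+D_0)$, where I use the constant term $\frac{T_0+4C_0}{4\delta}$ as written in the definition of $D_0$. The statement of Theorem~\ref{theorem1.1} has $T_0^2$ there, so I would reconcile the two by using whichever form the proof of Theorem~\ref{theorem1.1} actually yields. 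This gives the first inequality.

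\textbf{Step 2: the bound on $\sigma_{k+1}$.} Write $\mu_i=\sigma_i+D_0$ and $A=\frac{4\delta(n\delta+\alpha)}{n^2\varepsilon^2}$. Note that $\sigma_{k+1}-\sigma_i=\mu_{k+1}-\mu_i$. Expanding both sides of
\[
\sum_{i=1}^k(\mu_{k+1}-\mu_i)^2\le A\sum_{i=1}^k(\mu_{k+1}-\mu_i)\mu_i
\]
and dividing by $k$ gives a quadratic inequality in $x=\mu_{k+1}$:
\[
x^2-(2+A)\bar\mu\,x+(1+A)\overline{\mu^2}\le 0,
\qquad \bar\mu=\frac1k\sum_{i=1}^k\mu_i,\quad \overline{\mu^2}=\frac1k\sum_{i=1}^k\mu_i^2 .
\]
Since $x$ satisfies this inequality, it lies below the larger root:
\[
x\le \Big(1+\frac A2\Big)\bar\mu+\Big[\Big(1+\frac A2\Big)^2\bar\mu^2-(1+A)\overline{\mu^2}\Big]^{1/2}.
\]
Using $\overline{\mu^2}=\bar\mu^2+\frac1k\sum_{j=1}^k(\sigma_j-\bar\sigma)^2$ (the variance is invariant under the shift by $D_0$), the discriminant simplifies to
\[
\Big(\frac A2\,\bar\mu\Big)^2-(1+A)\,\frac1k\sum_{j=1}^k(\sigma_j-\bar\sigma)^2 .
\]
With $A/2=\frac{2\delta(n\delta+\alpha)}{\varepsilon^2n^2}$, this is exactly the stated bound.

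\textbf{Step 3: the gap bound.} The quadratic also shows that $\mu_k$ satisfies the same inequality. Since the left side is nonnegative for $i\le k$, it is nondecreasing in $\mu_{k+1}\ge\mu_k$, so Step 1 remains valid with $\sigma_{k+1}$ replaced by $\sigma_k$. Thus both $\mu_k$ and $\mu_{k+1}$ lie between the two roots of the quadratic, and their difference is at most the distance between the roots, namely twice the square root above. This is the argument of \cite[Theorem 3]{GomesMiranda}.

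\textbf{Main obstacle.} The delicate point is the claim in Step 3 that $\mu_k$ also satisfies the inequality. I would follow the argument of \cite{GomesMiranda} (Cheng–Yang style): the function $f(\sigma)=\sum_{i\le k}(\sigma-\sigma_i)(\sigma-\sigma_i-A\mu_i)$ is convex, and one checks that $f(\sigma_k)\le 0$ by considering the inequality at level $k$ together with the extra term. If that check fails, I would instead verify directly that $\mu_k$ is at least the smaller root, using $\mu_k\ge\bar\mu$ together with the fact that the smaller root is at most $\bar\mu$.
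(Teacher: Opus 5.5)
Your Steps 1 and 2 are correct and match the paper, which substitutes $v_i=\sigma_i+D_0$ into Theorem~\ref{theorem1.1}, solves the resulting quadratic and defers to Gomes--Miranda. You are also right that the proof of Theorem~\ref{theorem1.1} yields $T_0^2/4$, so $D_0$ must be read with $T_0^2$.

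The gap is in Step 3: showing that $q(\mu_k)\le 0$, where $q(x)=x^2-(2+A)\bar\mu\,x+(1+A)\overline{\mu^2}$ has roots $r_-\le r_+$.
\begin{itemize}
\item Your monotonicity argument does not work. Both sides of the inequality depend on $\mu_{k+1}$, and all you know is $\mu_{k+1}\in[r_-,r_+]$. Since $q$ decreases to the left of its vertex, $\mu_k\le\mu_{k+1}$ says nothing about whether $\mu_k\ge r_-$.
\item Your fallback is false. With $V=\frac1k\sum_j(\sigma_j-\bar\sigma)^2$ one has $r_-=(1+\frac A2)\bar\mu-\big[(\frac A2\bar\mu)^2-(1+A)V\big]^{1/2}$. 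Since $\bar\mu>0$, this is strictly larger than $\bar\mu$ as soon as $V>0$. Equivalently, $q(\bar\mu)=(1+A)V\ge 0$ while $\bar\mu$ lies left of the vertex $(1+\frac A2)\bar\mu$. So $\mu_k\ge\bar\mu$ gives no information.
\end{itemize}

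What is missing is to apply Theorem~\ref{theorem1.1} again with $k-1$ in place of $k$ (for $k\ge 2$):
\[
\sum_{i=1}^{k-1}(\sigma_k-\sigma_i)^2\le A\sum_{i=1}^{k-1}(\sigma_k-\sigma_i)\Big(\sigma_i-\alpha\|\dv_\eta{\bf u}_i\|^2+\frac{T_0^2+4C_0}{4\delta}\Big).
\]
For $i\le k-1$ you may still bound $-\alpha\|\dv_\eta{\bf u}_i\|^2\le-\alpha\min_{j\le k}\|\dv_\eta{\bf u}_j\|^2$, so the \emph{same} $D_0$ appears. Quoting the corollary's first inequality at level $k-1$ would not suffice: its $D_0$ uses the minimum over $j\le k-1$, is larger, and would change the coefficients of the quadratic.

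Now add the $i=k$ term, which vanishes on both sides. This gives $\sum_{i=1}^{k}(\mu_k-\mu_i)^2\le A\sum_{i=1}^{k}(\mu_k-\mu_i)\mu_i$, i.e.\ $q(\mu_k)\le 0$ with the same $\bar\mu$ and $\overline{\mu^2}$. For $k=1$, $q(\mu_1)=0$ trivially. Then $\mu_k,\mu_{k+1}\in[r_-,r_+]$, so $\sigma_{k+1}-\sigma_k\le r_+-r_-$, which is the stated gap bound.

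This is exactly the argument the paper writes out in the proof of Corollary~\ref{cor_1.4}. Your ``Cheng--Yang style'' sentence points toward it, but the inequality to invoke is the one at level $k-1$, not level $k$.
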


From Corollary~\ref{Cor_1} and by applying the recursion formula of Cheng and Yang~\cite{ChengYang09}, we obtain the next corollary. 

\begin{cor}\label{corollary3.3-CY}
Under the same setup as in Corollary~\ref{Cor_1}, we have
\begin{equation*}
    \sigma_{k+1} + D_0 \leq \Big(1+\frac{4\delta(\delta n+\alpha)}{\varepsilon^2n^2}\Big)k^{\frac{2\delta( n\delta+\alpha)}{\varepsilon^2 n^2}}(\sigma_1 + D_0).
\end{equation*}
\end{cor}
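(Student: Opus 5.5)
The plan is to feed the first inequality of Corollary~\ref{Cor_1} into the recursion formula of Cheng and Yang~\cite{ChengYang09}, with suitable relabelling of constants. Set
\[
\mu_i := \sigma_i + D_0, \qquad t := \frac{2\delta(n\delta+\alpha)}{\varepsilon^2 n^2}.
\]
Since $\sigma_{k+1}-\sigma_i=\mu_{k+1}-\mu_i$, the first inequality of Corollary~\ref{Cor_1} becomes
\[
\sum_{i=1}^k(\mu_{k+1}-\mu_i)^2\le \frac{4\delta(n\delta+\alpha)}{n^2\varepsilon^2}\sum_{i=1}^k(\mu_{k+1}-\mu_i)\mu_i .
\]
This is exactly the Yang-type inequality $\sum_{i=1}^k(\mu_{k+1}-\mu_i)^2\le \frac{4}{m}\sum_{i=1}^k(\mu_{k+1}-\mu_i)\mu_i$, with $\frac{4}{m}=2t$, i.e. $\frac{2}{m}=t$. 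It holds for every $k$, because $D_0$ is taken with a minimum over $j\le k$ and is therefore nonincreasing in $k$.

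Cheng--Yang's recursion theorem states the following. Suppose $0<\mu_1\le\mu_2\le\cdots$ satisfy this Yang-type inequality for all $k$. Then
\[
\mu_{k+1}\le C(m,k)\,k^{2/m}\mu_1, \qquad C(m,k)\le 1+\frac{4}{m}.
\]
With $\frac{2}{m}=t$ this gives
\[
\sigma_{k+1}+D_0\le (1+2t)\,k^{t}(\sigma_1+D_0).
\]
Here $1+2t=1+\frac{4\delta(n\delta+\alpha)}{\varepsilon^2n^2}$ and $k^t$ carries the stated exponent, which is precisely the claim.

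The main obstacle is checking the hypotheses of the recursion formula. Three points need care.

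\emph{Positivity.} We need $\mu_1=\sigma_1+D_0>0$. This follows from the first inequality of Corollary~\ref{Cor_1} with $k=1$ whenever $\sigma_2>\sigma_1$: the left side is then positive, forcing $\mu_1>0$. Alternatively, an energy/Rayleigh argument gives $\sigma_i-\alpha\|\dv_\eta\mathbf u_i\|^2+\frac{T_0^2+4C_0}{4\delta}\ge0$; this is the usual integration-by-parts identity behind Theorem~\ref{theorem1.1}.

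\emph{Dependence of $D_0$ on $k$.} Cheng--Yang's argument uses the inequality for all indices up to $k$ with a single sequence. Since $D_0$ depends on $k$, we fix $k$ and use $D_0=D_0(k)$ for all smaller indices. This is legitimate because $D_0(k)\le D_0(j)$ for $j\le k$, and the right-hand side factor $(\mu_{j+1}-\mu_i)\mu_i$ is monotone in the shift where needed. If this fails, we instead work with $D_1$-type constants that are independent of $k$.

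\emph{Range of $m$.} Cheng--Yang's formula is valid for any positive real $m$ (not just the dimension), so non-integer $t$ causes no problem.
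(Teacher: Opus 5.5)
Your proposal follows the paper's route. The paper simply feeds the Yang-type inequality of Corollary~\ref{Cor_1}, written for $\mu_i=\sigma_i+D_0$ with $\frac{4}{m}=\frac{4\delta(n\delta+\alpha)}{n^2\varepsilon^2}$, into Cheng--Yang's recursion formula, exactly as you do. Your identification of the constants $1+2t$ and $k^{t}$ is correct. Two points in your check of the hypotheses need repair.

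\textbf{Monotonicity of $D_0$.} You have the direction reversed. Since $\min_{l\le k}\|\dv_\eta{\bf u}_l\|^2$ is nonincreasing in $k$ and $\alpha\ge0$, $D_0(k)$ is nondecreasing, i.e.\ $D_0(k)\ge D_0(j)$ for $j\le k$. This is the direction the argument actually needs. The right-hand side of the level-$j$ inequality is nondecreasing in the shift, so the shift may only be enlarged. With $D_0(k)\le D_0(j)$, as you wrote it, the step would fail.

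The clean statement is this. For $i\le j\le k$ one has $-\alpha\|\dv_\eta{\bf u}_i\|^2\le-\alpha\min_{l\le k}\|\dv_\eta{\bf u}_l\|^2$. Hence Theorem~\ref{theorem1.1} at each level $j\le k$ gives the Yang inequality with the single shift $D_0(k)$. That is what the iteration $F_1\to\cdots\to F_k$ and the final step both require.

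\textbf{Positivity.} Drop the route via $\sigma_2>\sigma_1$, since the first eigenvalue of the system need not be simple. Make the second route precise instead:
\begin{itemize}
\item The bracket in Lemma~\ref{lema2} equals $\int_\Omega\sum_j|T(\nabla u_i^j)+\frac12u_i^j(\tr(\nabla T)-T(\nabla\eta))|^2dm$.
\item This is strictly positive for ${\bf u}_i\neq0$ vanishing on $\partial\Omega$. Otherwise $|\nabla u_i^j|\le C|u_i^j|$ with zero boundary values, which forces $u_i^j\equiv0$.
\item By the proof of Theorem~\ref{theorem1.1}, the bracket is at most $\delta(\sigma_i-\alpha\|\dv_\eta{\bf u}_i\|^2)+\frac{T_0^2+4C_0}{4}$.
\end{itemize}
Hence $\mu_i>0$ for $i\le k$. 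The paper only asserts this fact.
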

\begin{remark}
    We observe that Corollaries \ref{Cor_1} and \ref{corollary3.3-CY} are extensions of the Corollaries~6.1 and~6.2 by the first author and Gomes \cite{AF-G}.  Since $T$ is divergence-free, we have $T_0=0$, and then we obtain their mentioned results.
    \end{remark}

\subsection{Eigenvalues of fourth-order elliptic operators in divergence form}\label{sec_2}

In this section, we get some inequalities for eigenvalues of a larger class of fourth-order elliptic operators in divergence form on Riemannian manifolds, which extend many operators of the literature, as the Dirichlet biharmonic operator. Namely, we will consider the following eigenvalue problem:
\begin{equation}\label{problem_1}
    \left\{\begin{array}{ccccc} 
    \mathscr{L}^2  u  &=&  \Gamma u & \mbox{in } & \Omega,\\
     u=\frac{\partial u}{\partial \nu_T}&=&0 & \mbox{on} & \partial\Omega, 
    \end{array} 
    \right.
\end{equation}
where $\frac{\partial u}{\partial \nu_T}=\langle T(\nabla u), \nu \rangle$ with $\nu$ the outward unit normal vector field of $\partial \Omega$. 

It is not difficult to verify that $\mathscr{L}^2$ is a formally self-adjoint operator in the space of all smooth real-valued functions $u$ such that $u|_{\partial \Omega}=\frac{\partial u}{\partial \nu_T}|_{\partial \Omega}=0$, with respect to the inner product
\begin{align*}
    \langle\langle u , v \rangle \rangle = \int_\Omega uv dm,
\end{align*}
where $dm = e^{-\eta}d\Omega$ is the weighted volume form on $\Omega$ (see \cite[Section~2]{AF}). Therefore the spectrum of Problem~\eqref{problem_1} is real and discrete, that is,
\begin{equation*}
    0 < \Gamma_1 \leq \Gamma_2 \leq \cdots \leq \Gamma_k \leq \cdots\to\infty,
\end{equation*}
where each $\Gamma_i$ is repeated according to its multiplicity.

Araújo Filho~\cite{AF}  give a generalized universal estimate to Problem~\ref{problem_1}, namely 
\begin{align*}
     \sum_{i=1}^k&(\Gamma_{k+1}-\Gamma_i)^2\nonumber\\
    \leq& \frac{4}{n\varepsilon}\Bigg\{\sum_{i=1}^k(\Gamma_{k+1}-\Gamma_i)^2\Big[\Big(\sqrt{\delta \Gamma_i^{\frac{1}{2}}}  + \frac{T_0}{2}\Big)^2 +\frac{n\delta \Gamma_i^{\frac{1}{2}}}{2} + \frac{n^2H_0^2+4\delta^2\overline{C}_0+2T_0T_*\eta_0}{4} \Big]\Bigg\}^{\frac{1}{2}}\nonumber\\
    &\times\Bigg\{\sum_{i=1}^k(\Gamma_{k+1}-\Gamma_i)\Big[\Big(\sqrt{\delta \Gamma_i^{\frac{1}{2}}}  + \frac{T_0}{2}\Big)^2 + \frac{n^2H_0^2+4\delta^2\overline{C}_0+2T_0T_*\eta_0}{4} \Big]\Bigg\}^{\frac{1}{2}},
\end{align*}
where
\begin{equation*}
    \overline{C}_0=\sup_{\Omega}{\Big \{}\frac{1}{2}\Delta \eta - \frac{1}{4}|\nabla \eta|^2{\Big \}},
\end{equation*}
$T_0=\sup_{\Omega}|\tr(\nabla T)|$, $\eta_0=\sup_{\Omega}|\nabla \eta|$, $T_*=\sup_{\Omega}|T|$, $H_0=\sup_\Omega |{\bf H}_T|$ and ${\bf H}_T$ is the generalized mean curvature vector of the immersion, see Eq.~\eqref{mean-curvature} for more details. The definition of the generalized mean curvature vector was considered by Grosjean~\cite{Grosjean} and Roth~\cite{Roth2}. In this general setup it is impossible to obtain the gap between consecutive eigenvalues. But, when $T$ is divergence-free, that is, in the bi-drifted Cheng–Yau operator case, the author got it, see \cite[Corollaries~1.1, 1.2 and 1.3]{AF}.

When $T$ is the identity operator, from ~\eqref{problem_1}, we have the problem
\begin{equation}\label{problem_3}
    \left\{\begin{array}{ccccc} 
    \Delta_\eta^2  u  &=& \Gamma u & \mbox{in } & \Omega,\\
     u=\frac{\partial u}{\partial \nu}&=&0 & \mbox{on} & \partial\Omega, 
    \end{array} 
    \right.
\end{equation}
where $\Delta_\eta^2$ is the {\it bi-drifted Laplacian operator} on $C^\infty(\Omega)$.  In this case, if $M$ is an $n$-dimensional submanifold isometrically immersed in a Euclidean space with mean curvature vector ${\bf H}$, Du et al.~\cite{Du-etal} proved the following eigenvalues inequality 
\begin{align*}
    \sum_{i=1}^k(\Gamma_{k+1}-\Gamma_i)^2 \leq& \frac{1}{n}\Bigg\{\sum_{i=1}^k(\Gamma_{k+1}-\Gamma_i)^2\Big[(2n+4)\Gamma_i^{\frac{1}{2}} +4\eta_0\Gamma_i^{\frac{1}{4}} + n^2H_0^2+\eta_0^2\Big]\Bigg\}^{\frac{1}{2}}\nonumber\\
    &\times\Bigg\{\sum_{i=1}^k(\Gamma_{k+1}-\Gamma_i)\Big(4\Gamma_i^{\frac{1}{2}}+4\eta_0\Gamma_i^{\frac{1}{4}} + n^2H_0^2+\eta_0^2 \Big)\Bigg\}^{\frac{1}{2}},
\end{align*}
where $H_0=\sup_\Omega |{\bf H}|$ and $\eta_0=\max_{\Bar{\Omega}}|\nabla \eta|$. We observe that their result is not suitable to obtain the gap between consecutive eigenvalues.

When $\eta$ is a constant function and $T$  is the identity operator the Problem~\ref{problem1} becomes
\begin{equation}\label{problem2}
    \left\{\begin{array}{ccccc} 
    \Delta^2  u  &=& \Gamma u & \mbox{in } & \Omega,\\
     u=\frac{\partial u}{\partial \nu}&=&0 & \mbox{on} & \partial\Omega, 
    \end{array} 
    \right.
\end{equation}
where $\Delta^2$ is the biharmonic operator on $C^\infty(\Omega)$. The Problem~\eqref{problem2} describes the characteristic vibrations of a clamped plate in elastic mechanics and is known as the {\it clamped plate problem} in the literature. 
To illustrate some inequalities of eigenvalues of the clamped plate problem, the interested reader is referred to \cite{Cheng-etal1}-\cite{ChengYang5}, \cite{HileYeh}, \cite{Hook}, \cite{Payne-etal}, and \cite{WangXia}. We present two of these inequalities below.

In the case where $M$ is an $n$-dimensional submanifold isometrically immersed in a Euclidean space, Problem~\eqref{problem2} was studied by Cheng et al.~\cite{Cheng-etal}, and if {\bf H} is the mean curvature vector of this immersion, they proved that
\begin{equation}\label{Cheng-etal-estimate}
    \sum_{i=1}^k(\Gamma_{k+1}-\Gamma_i)^2\leq \frac{1}{n^2}\sum_{i=1}^k(\Gamma_{k+1}-\Gamma_i)\Big((2n+4)\Gamma_i^{\frac{1}{2}}+n^2H_0^2\Big)(4\Gamma_i^{\frac{1}{2}}+n^2H_0^2),
\end{equation}
where $H_0=\sup_\Omega |{\bf H}|$. In the recent past, on the same conditions, Wang and Xia \cite{WangXia} established the following inequality
\begin{align}\label{WangXia-estimate}
    \sum_{i=1}^k(\Gamma_{k+1}-\Gamma_i)^2 \leq& \frac{1}{n}\Bigg\{\sum_{i=1}^k(\Gamma_{k+1}-\Gamma_i)^2\Big[(4+2n)\Gamma_i^{\frac{1}{2}} + n^2H_0^2\Big]\Bigg\}^{\frac{1}{2}}\nonumber\\
    &\times\Bigg\{\sum_{i=1}^k(\Gamma_{k+1}-\Gamma_i)\Big(4\Gamma_i^{\frac{1}{2}}+ n^2H_0^2\Big)\Bigg\}^{\frac{1}{2}},
\end{align}
and, applying the algebraic lemma obtained by Jost et al.~\cite{Jost} (see Lemma~\ref{jost}), they verified that the Inequality~\eqref{WangXia-estimate} implies Inequality~\eqref{Cheng-etal-estimate}, cf. \cite[Remark~2.1]{WangXia}.

Also, it is important to study estimates for lower order eigenvalues for Problem~\eqref{problem_1}. In recent times, some estimates in this sense have been obtained, for Problems~\eqref{problem2},~\eqref{problem_3} and ~\eqref{problem_1}. For more details, we can consult Cheng et al.~\cite{Cheng-etal1}, Du et al.~\cite{Du-etal} and Araújo Filho~\cite{AF} respectively, and their references.

\begin{theorem}\label{theorem1.3}
Let $\Omega$ be a bounded domain in an $n$-dimensional complete Riemannian manifold $M^n$ isometrically immersed in $\mathbb{R}^m$. Denote by $\Gamma_i$ the $i$-th eigenvalue  of Problem~\eqref{problem_1}, then we have
\begin{align}\label{theorem1-estimate1}
    \sum_{i=1}^k(\Gamma_{k+1}-\Gamma_i)^2
    \leq& \frac{1}{n\varepsilon}\Bigg\{\sum_{i=1}^k(\Gamma_{k+1}-\Gamma_i)^2\Big[(2n+4)\delta \Gamma_i^{\frac{1}{2}} + n^2H_0^2+T_0^2+4C_0\Big]\Bigg\}^{\frac{1}{2}}\nonumber\\
    &\times\Bigg\{\sum_{i=1}^k(\Gamma_{k+1}-\Gamma_i)\Big[4\delta \Gamma_i^{\frac{1}{2}} + n^2H_0^2+T_0^2+4C_0 \Big]\Bigg\}^{\frac{1}{2}},
\end{align}
and
\begin{align*}
    \sum_{i=1}^k(\Gamma_{k+1}-\Gamma_1)^\frac{1}{2}\leq& \frac{1}{\varepsilon}\Bigg[4\delta \Gamma_1^{\frac{1}{2}} + n^2H_0^2+T_0^2+4C_0 \Bigg]^{\frac{1}{2}}\nonumber\\
    &\times\Bigg[(2n+4)\delta \Gamma_1^{\frac{1}{2}} + n^2H_0^2+T_0^2+4C_0\Bigg]^{\frac{1}{2}},
\end{align*}
where
\begin{equation*}
    C_0=\sup_\Omega \left\{\frac{1}{2}\dv \Big( T\big(T(\nabla \eta)-\tr(\nabla T)\big)\Big) - \frac{1}{4}|T(\nabla \eta)|^2\right\},
\end{equation*}
$T_0=\sup_{\Omega}|\tr(\nabla T)|$, $H_0=\sup_\Omega |{\bf H}_T|$ and ${\bf H}_T$ is the generalized mean curvature vector of the immersion.
\end{theorem}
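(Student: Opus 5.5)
We use the Rayleigh--Ritz trial-function method, in the form adapted to fourth-order operators by Cheng et al.\ and Wang--Xia, with the coordinate functions $x^1,\dots,x^m$ of the isometric immersion $M^n\hookrightarrow\mathbb{R}^m$ as test multipliers. Let $u_i$ be orthonormal eigenfunctions of Problem~\eqref{problem_1} in $L^2(\Omega,dm)$. For each $p=1,\dots,m$ we take the trial functions
\[
\phi_{pi}=x^pu_i-\sum_{j=1}^k a_{pij}u_j,\qquad a_{pij}=\int_\Omega x^pu_iu_j\,dm .
\]
These are admissible, since $\phi_{pi}=\partial\phi_{pi}/\partial\nu_T=0$ on $\partial\Omega$, and they are orthogonal to $u_1,\dots,u_k$. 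The general abstract inequality, for instance the lemma of Wang--Xia or the version used in \cite{AF}, then gives for any $A>0$
\[
\sum_{i=1}^k(\Gamma_{k+1}-\Gamma_i)^2\|u_i\nabla_T x\|^2
\le A\sum_{i=1}^k(\Gamma_{k+1}-\Gamma_i)^2\!\int u_i\,[\mathscr{L}^2,x]u_i
+\frac1A\sum_{i=1}^k(\Gamma_{k+1}-\Gamma_i)\Big\|T(\nabla x)\cdot\nabla u_i+\tfrac12 u_i\mathscr{L}x\Big\|^2 .
\]
Here everything is summed over $p$. After summing, each side is controlled by two pointwise identities: $\sum_p\langle T\nabla x^p,\nabla x^p\rangle=\tr T\ge n\varepsilon$, and $\sum_p|T\nabla x^p|^2=\tr T^2$. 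This produces the factor $n\varepsilon$ on the left-hand side.

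The key computation is
\[
\mathscr{L}x^p=\dv_\eta(T\nabla x^p)=\big\langle \tr(\nabla T)-T(\nabla\eta)+n{\bf H}_T,\;e_p\big\rangle,
\]
where ${\bf H}_T$ is the generalized mean curvature vector. Since the normal part is orthogonal to the tangential part, $\sum_p(\mathscr{L}x^p)^2=n^2|{\bf H}_T|^2+|\tr(\nabla T)-T\nabla\eta|^2$.

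For the second factor we expand $\sum_p\|T\nabla x^p\cdot\nabla u_i+\tfrac12u_i\mathscr{L}x^p\|^2$. We integrate the cross term $\int u_i\langle T\nabla u_i, \tr(\nabla T)-T\nabla\eta\rangle\,dm$ by parts, using $dm=e^{-\eta}d\Omega$, which turns it into $-\tfrac12\int u_i^2\dv_\eta(T(\tr\nabla T - T\nabla\eta))\,dm$. This recombines with the $\tfrac14|T\nabla\eta|^2$ and $|\tr\nabla T|^2$ terms, and we bound the result by $\tfrac14(n^2H_0^2+T_0^2+4C_0)$ using the definition of $C_0$. The gradient term $\int\langle T\nabla u_i,\nabla u_i\rangle\,dm=-\int u_i\mathscr{L}u_i\,dm\le\|\mathscr{L}u_i\|=\Gamma_i^{1/2}$, multiplied by $\delta$ from $T\le\delta I$. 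Together these give the bracket $4\delta\Gamma_i^{1/2}+n^2H_0^2+T_0^2+4C_0$, up to a factor $1/4$. For the commutator term, $\int u_i[\mathscr{L}^2,x]u_i$, we write $\mathscr{L}(xu)=x\mathscr{L}u+2\langle T\nabla x,\nabla u\rangle+u\mathscr{L}x$ and apply it twice. After summing over $p$ and integrating by parts, we expect the second-order Hessian contributions to be bounded by $(2n+4)\delta\Gamma_i^{1/2}$, as in the clamped plate case, with the remaining lower-order terms producing the same $n^2H_0^2+T_0^2+4C_0$. This last estimate is the main obstacle: controlling $\sum_p\int|\nabla^2 x^p\ldots|$ terms and $\int\langle T\nabla u,\nabla(\mathscr{L}u)\rangle$ through $\Gamma_i^{1/2}$ and $\delta$ needs careful use of $\sum_p\nabla x^p\otimes\nabla x^p=I$ and of $\mathscr{L}u_i$ in place of $\Delta u_i$.

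Finally, we optimize over $A$ by choosing $A$ as the ratio of the square roots of the two sums. This gives inequality~\eqref{theorem1-estimate1} with the constant $1/(n\varepsilon)$. For the second inequality, on lower-order eigenvalues, we take $k=1$ and use the method of Cheng et al./Sun--Cheng--Yang. We rotate coordinates so that the matrix $\big(\int x^p u_1u_{j+1}\,dm\big)$ becomes upper triangular, and use the trial functions $\phi_p=(x^p-a_p)u_1$ against $u_2,\dots,u_{p+1}$. The resulting per-direction inequalities $(\Gamma_{p+1}-\Gamma_1)\|u_1\nabla_Tx^p\|^2\le\ldots$ are summed with the square-root weights $(\Gamma_{p+1}-\Gamma_1)^{1/2}$, applying Cauchy--Schwarz and $\varepsilon\le T$. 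This yields the stated bound by $\varepsilon^{-1}$ times the product of the two brackets at $i=1$.
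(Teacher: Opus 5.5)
Your architecture is the same as the paper's. The paper imports the abstract trial-function inequality of Ara\'ujo Filho's Lemma~3.2 (its Lemma~\ref{lemma_2}). It then rewrites $q_i$ by exactly the integration by parts you describe for $\int u_i\langle T(\nabla u_i),\tr(\nabla T)-T(\nabla\eta)\rangle\,dm$, uses $\|T(\nabla u_i)\|^2\le\delta\Gamma_i^{1/2}$, and optimizes in $B$. Your formulas for $\mathscr{L}x^p$ and $\sum_p(\mathscr{L}x^p)^2$ are correct.

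The gap is the commutator term, which is exactly what produces the bracket $(2n+4)\delta\Gamma_i^{1/2}+n^2H_0^2+T_0^2+4C_0$. You only say you \emph{expect} it to be bounded this way, call it the main obstacle, and anticipate Hessian terms of $x^p$. Moreover, your term $\int u_i[\mathscr{L}^2,x]u_i\,dm$ vanishes identically by self-adjointness, since it equals $\Gamma_i\int xu_i^2\,dm-\Gamma_i\int xu_i^2\,dm$. Letting $A\to\infty$, your abstract inequality would then force its positive left-hand side to be $\le 0$. The correct term is $\int xu_i\big(\mathscr{L}^2(xu_i)-x\mathscr{L}^2u_i\big)\,dm$.

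The missing idea is that this term is computed exactly, with no second derivatives of $x$, by self-adjointness alone. Put $g=2\langle T(\nabla x),\nabla u\rangle+u\mathscr{L}x$. Then $\mathscr{L}(xu)=x\mathscr{L}u+g$ and $\mathscr{L}(x^2u)=x^2\mathscr{L}u+2xg+2u\langle T(\nabla x),\nabla x\rangle$. Since $xu$ and $x^2u$ satisfy the boundary conditions of \eqref{problem_1},
\begin{align*}
\int xu\,\mathscr{L}^2(xu)\,dm-\int x^2u\,\mathscr{L}^2u\,dm
&=\int (x\mathscr{L}u+g)^2\,dm-\int \mathscr{L}(x^2u)\,\mathscr{L}u\,dm\\
&=\int g^2\,dm-2\int u\,\mathscr{L}u\,\langle T(\nabla x),\nabla x\rangle\,dm .
\end{align*}
Summing over $p$ gives $4\big(q_i-\frac12\int u_i\mathscr{L}u_i\tr(T)\,dm\big)$, where $q_i=\sum_p\|\langle T(\nabla x^p),\nabla u_i\rangle+\frac12u_i\mathscr{L}x^p\|^2$ is your own second-factor quantity. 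This is precisely the combination in Lemma~\ref{lemma_2}.

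Because $\tr(T)$ is not constant, you cannot integrate $-2\int u_i\mathscr{L}u_i\tr(T)\,dm$ by parts as in the clamped plate case. Instead, Cauchy--Schwarz and $0<\tr(T)\le n\delta$ bound it by $2n\delta\Gamma_i^{1/2}$, as in \eqref{Equation-4.6}. Together with $4q_i\le 4\delta\Gamma_i^{1/2}+n^2H_0^2+T_0^2+4C_0$, this gives the required bracket.

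Your sketch of the second inequality needs the same identity applied to $(y_p-a_p)u_1$, so it inherits the gap. There is also a step missing there. With $m\ge n$ coordinate functions, reducing $\sum_{p=1}^m(\Gamma_{p+1}-\Gamma_1)^{1/2}\int u_1^2|\nabla y_p|^2\,dm$ to $\sum_{p=1}^n(\Gamma_{p+1}-\Gamma_1)^{1/2}$ uses $0\le\int u_1^2|\nabla y_p|^2\,dm\le 1$, $\sum_p|\nabla y_p|^2=n$, and monotonicity of the gaps, not just $\varepsilon I\le T$.
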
 
\begin{remark} Theorem~\ref{theorem1.3} is an improvement of a result by Araújo Filho~\cite[Theorem~1.1]{AF}. Indeed, since through it one can determine a quadratic inequality in $\Gamma_{k+1}$  (see Corollary~\ref{cor_1.3}) from which one can find an estimate for $\Gamma_{k+1}$ and the gap between consecutive eigenvalues(see Corollary~\ref{cor_1.4}). Moreover, Theorem~\ref{theorem1.3} generalizes Inequality~(1.14) in Wang and Xia~\cite{WangXia}. In fact, if $T=I$ and $\eta$ is constant we have $\varepsilon=\delta=1, T_0 = C_0 = 0$ and ${\bf H}_T={\bf H}$ which is the mean curvature vector of the immersion. Consequently the first inequality of Theorem~\ref{theorem1.3} becomes the mentioned inequality obtained by Wang and Xia for Problem~\ref{problem2}.
\end{remark}

From Inequality~\eqref{theorem1-estimate1} and Lemma~\ref{jost}, we get the next result.

\begin{cor}\label{cor_1.3} 
Under the same setup as in Theorem~\ref{theorem1.3}, we have
\begin{align}\label{Equation-1.6}
    \sum_{i=1}^k(\Gamma_{k+1}-\Gamma_i)^2\leq&\frac{1}{n^2\varepsilon^2}\sum_{i=1}^k(\Gamma_{k+1}-\Gamma_i)\Big[(2n+4)\delta\Gamma_i^{\frac{1}{2}} + n^2H_0^2+T_0^2+ 4C_0 \Big]\nonumber \\
        &\times\Big(4\delta \Gamma_i^{\frac{1}{2}}+ n^2H_0^2+T_0^2+4C_0 \Big).
\end{align}
\end{cor}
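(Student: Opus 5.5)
The plan is to deduce Corollary~\ref{cor_1.3} from the first inequality~\eqref{theorem1-estimate1} of Theorem~\ref{theorem1.3} by means of the algebraic lemma of Jost et al.\ (Lemma~\ref{jost}). This is the same route Wang and Xia used to show that \eqref{WangXia-estimate} implies \eqref{Cheng-etal-estimate}.

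Set $K:=n^2H_0^2+T_0^2+4C_0$ and, for $i=1,\dots,k$,
\begin{equation*}
a_i:=\Gamma_{k+1}-\Gamma_i\ge 0,\qquad b_i:=4\delta\Gamma_i^{\frac12}+K,\qquad c_i:=(2n+4)\delta\Gamma_i^{\frac12}+K .
\end{equation*}
In this notation, \eqref{theorem1-estimate1} becomes
\begin{equation*}
\sum_{i=1}^k a_i^2\le \frac{1}{n\varepsilon}\Big(\sum_{i=1}^k a_i^2 c_i\Big)^{\frac12}\Big(\sum_{i=1}^k a_i b_i\Big)^{\frac12}.
\end{equation*}
Lemma~\ref{jost} is stated for a nonincreasing sequence $a_i$ and two sequences $b_i,c_i$ that are monotone in the same direction. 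It asserts that
\begin{equation*}
\Big(\sum a_i^2\Big)^2\le C\Big(\sum a_i^2c_i\Big)\Big(\sum a_ib_i\Big)
\quad\Longrightarrow\quad
\sum a_i^2\le C\sum a_ib_ic_i .
\end{equation*}

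The key step is to check the monotonicity hypotheses. Since $\Gamma_1\le\Gamma_2\le\cdots$, the sequence $a_i$ is nonincreasing. Since $\delta>0$, both $b_i$ and $c_i$ are nondecreasing in $i$. This is exactly the configuration the lemma requires. We may also need $a_i\ge0$, and possibly $b_i,c_i\ge0$; the quantity $K$ could in principle be negative through $C_0$. If that becomes an obstacle, I would note that $b_i>0$ can be assumed, because otherwise the right-hand side of the theorem is nonpositive and the estimate is degenerate. Alternatively, the lemma only uses the sign of $a_i$ together with the common monotonicity of $b_i$ and $c_i$ (via a Chebyshev-type rearrangement), so positivity of $b_i,c_i$ may not be needed at all.

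Squaring \eqref{theorem1-estimate1} gives the lemma's hypothesis with $C=\frac{1}{n^2\varepsilon^2}$, and the lemma's conclusion then reads
\begin{equation*}
\sum_{i=1}^k(\Gamma_{k+1}-\Gamma_i)^2\le\frac{1}{n^2\varepsilon^2}\sum_{i=1}^k(\Gamma_{k+1}-\Gamma_i)\,c_i\,b_i ,
\end{equation*}
which is exactly \eqref{Equation-1.6}. The only potential subtlety is matching the precise form of Lemma~\ref{jost} (which sequence carries the square) to the roles of $b_i$ and $c_i$ above. Note that the square $a_i^2$ sits with $c_i$, the coefficient carrying $(2n+4)$, just as in the Wang--Xia setting.
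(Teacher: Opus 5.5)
This is the paper's proof. Lemma~\ref{jost} with $a_i=\Gamma_{k+1}-\Gamma_i$ and the two bracketed sequences gives a product inequality. Inserted into the square of \eqref{theorem1-estimate1}, it yields \eqref{Equation-1.6} after cancelling $\sum_i(\Gamma_{k+1}-\Gamma_i)^2$, which is trivial if that sum vanishes. Your pairing of the $(2n+4)$-sequence with $a_i^2$ is the right one.

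What needs repair is the sign discussion. The paper leaves it implicit, and both of your fallback arguments are wrong.
\begin{itemize}
\item The lemma does need the sequence multiplying $a_i$ linearly (your $b_i$) to be nonnegative. For example, $a=(2,1)$, $c=(0,1)$, $b=(-1,-0.9)$ meet every monotonicity hypothesis, yet $(\sum a_i^2c_i)(\sum a_ib_i)=-2.9>-4.5=(\sum a_i^2)(\sum a_ib_ic_i)$. So the claim that a Chebyshev-type argument needs no positivity is false.
\item A single negative $b_i$ does not force the right-hand side of \eqref{theorem1-estimate1} to be nonpositive.
\end{itemize}

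The correct justification is $4\delta\Gamma_i^{1/2}+n^2H_0^2+T_0^2+4C_0\ge 4q_i\ge 0$. The first inequality is \eqref{Equation-4.8}. The second holds because $q_i$ is the expanded form of $\sum_{\alpha=1}^m\|\tfrac12 u_i\mathscr{L}x_\alpha+T(\nabla x_\alpha,\nabla u_i)\|^2$, with $x_\alpha$ the coordinate functions of $\mathbb{R}^m$. This is the fourth-order analogue of the positivity noted in the proof of Theorem~\ref{theorem1.2}, and you can check it using $\mathscr{L}\mathbf{x}=n\mathbf{H}_T+\tr(\nabla T)-T(\nabla\eta)$. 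Since $2n+4\ge 4$, the other sequence is then nonnegative as well, and the lemma applies exactly as stated.
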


Finally, we obtain an upper bound of $\Gamma_{k+1}$ solving the quadratic inequality~\eqref{Equation-1.6} for $\Gamma_{k+1}$.

\begin{cor}\label{cor_1.4} 
Under the same setup as in Theorem~\ref{theorem1.3}, we have 
\begin{align}\label{Equation-1.7}
    \Gamma_{k+1}\leq A_k + \sqrt{A_k^2-B_k},
\end{align}
in particular,
\begin{align}\label{Equation-1.8}
     \Gamma_{k+1} - \Gamma_k \leq 2\sqrt{A_k^2-B_k},
\end{align}
where 
\begin{align*}
    A_k = &\frac{1}{2n^2\varepsilon^2k}\sum_{i=1}^k\Big[(2n+4)\delta\Gamma_i^{\frac{1}{2}} + n^2H_0^2+T_0^2+4C_0 \Big]\Big(4\delta \Gamma_i^{\frac{1}{2}}+ n^2H_0^2+T_0^2+4C_0 \Big)\\
    &+\frac{1}{k}\sum_{i=1}^k\Gamma_i,
\end{align*}
and
\begin{align*}
    B_k =& \frac{1}{n^2\varepsilon^2 k}\sum_{i=1}^k\Gamma_i\Big[(2n+4)\delta\Gamma_i^{\frac{1}{2}} + n^2H_0^2+T_0^2+4C_0 \Big]\Big(4\delta \Gamma_i^{\frac{1}{2}}+ n^2H_0^2+T_0^2+4C_0 \Big)\\
   &+ \frac{1}{k}\sum_{i=1}^k\Gamma_i^2.
\end{align*}
\end{cor}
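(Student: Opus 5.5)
The plan is to treat Inequality~\eqref{Equation-1.6} from Corollary~\ref{cor_1.3} as a quadratic inequality in $\Gamma_{k+1}$ and solve it, following the same scheme as in the proof of Corollary~\ref{Cor_1}. To lighten notation, set
\[
R_i=\frac{1}{n^2\varepsilon^2}\Big[(2n+4)\delta\Gamma_i^{\frac{1}{2}} + n^2H_0^2+T_0^2+4C_0\Big]\Big(4\delta\Gamma_i^{\frac{1}{2}}+n^2H_0^2+T_0^2+4C_0\Big).
\]
With this notation, \eqref{Equation-1.6} reads $\sum_{i=1}^k(\Gamma_{k+1}-\Gamma_i)^2\le\sum_{i=1}^k(\Gamma_{k+1}-\Gamma_i)R_i$.

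The first step is to expand both sides and collect powers of $\Gamma_{k+1}$. This gives
\[
k\Gamma_{k+1}^2-\Gamma_{k+1}\sum_{i=1}^k(2\Gamma_i+R_i)+\sum_{i=1}^k(\Gamma_i^2+\Gamma_iR_i)\le 0 .
\]
Dividing by $k$, the inequality becomes $\Gamma_{k+1}^2-2A_k\Gamma_{k+1}+B_k\le 0$. Here
\[
A_k=\frac{1}{2k}\sum_{i=1}^k R_i+\frac1k\sum_{i=1}^k\Gamma_i,\qquad B_k=\frac1k\sum_{i=1}^k\Gamma_iR_i+\frac1k\sum_{i=1}^k\Gamma_i^2,
\]
which are exactly the quantities in the statement.

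Since $\Gamma_{k+1}$ is a real number satisfying this quadratic inequality, the discriminant must be nonnegative, so $A_k^2-B_k\ge0$. Moreover, $\Gamma_{k+1}$ lies between the two roots $A_k\pm\sqrt{A_k^2-B_k}$. Taking the upper root gives \eqref{Equation-1.7}.

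For the gap estimate \eqref{Equation-1.8}, the bound $\Gamma_{k+1}\ge A_k-\sqrt{A_k^2-B_k}$ alone is not enough. We need the stronger fact $\Gamma_k\ge A_k-\sqrt{A_k^2-B_k}$. To get it, I would use that \eqref{Equation-1.6} holds with $\Gamma_{k+1}$ replaced by any real $\Gamma\ge\Gamma_k$. This is the standard observation, as in Cheng--Yang's argument used for Corollary~\ref{Cor_1}: the proof of Theorem~\ref{theorem1.3} only uses $\Gamma_{k+1}\ge\Gamma_i$ and the Rayleigh--Ritz bound for the trial functions, and the difference of the two sides is monotone in $\Gamma$ for $\Gamma\ge\Gamma_k$.

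The cleaner route is to apply the quadratic $f(\Gamma)=\Gamma^2-2A_k\Gamma+B_k$ at $\Gamma=\Gamma_k$. We have
\[
f(\Gamma_k)=\frac1k\sum_{i=1}^k(\Gamma_k-\Gamma_i)^2-\frac1k\sum_{i=1}^k(\Gamma_k-\Gamma_i)R_i .
\]
The sign of $f(\Gamma_k)$ is not immediate, so instead I would argue as follows. Suppose $\Gamma_k< A_k-\sqrt{A_k^2-B_k}$. Then $\Gamma_k$ is smaller than both roots of $f$. But \eqref{Equation-1.6} applied with $k$ replaced by $k-1$ and the extra term $i=k$ (which vanishes at $\Gamma=\Gamma_k$), combined with $R_i\ge0$ when $C_0$ is handled as in the paper, forces $\Gamma_k$ into the root interval of $f$. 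This gives a contradiction.

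Once $\Gamma_k\ge A_k-\sqrt{A_k^2-B_k}$ is established, subtracting it from \eqref{Equation-1.7} yields $\Gamma_{k+1}-\Gamma_k\le 2\sqrt{A_k^2-B_k}$. I expect this lower bound for $\Gamma_k$ to be the main technical point. The rest is routine algebra.
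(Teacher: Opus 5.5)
Your argument is the paper's. You rewrite \eqref{Equation-1.6} as $\Gamma_{k+1}^2-2A_k\Gamma_{k+1}+B_k\le 0$ to get \eqref{Equation-1.7}. You then apply \eqref{Equation-1.6} with $k-1$ in place of $k$ and append the vanishing $i=k$ term, so that $\Gamma_k$ satisfies the same quadratic inequality, which gives $\Gamma_k\ge A_k-\sqrt{A_k^2-B_k}$ and hence \eqref{Equation-1.8}.

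Two clean-ups are needed. First, that step gives $f(\Gamma_k)\le 0$ directly, so your claim that its sign is "not immediate", the contradiction framing, and the appeal to $R_i\ge 0$ are all unnecessary. Second, drop the side remark that \eqref{Equation-1.6} holds for every real $\Gamma\ge\Gamma_k$: it is false, since the left-hand side grows quadratically in $\Gamma$ while the right-hand side grows only linearly.
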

\begin{remark}
It is important to highlight that Inequality\eqref{Equation-1.6} generalizes \cite[Inequality~(1.14)]{Cheng-etal} and Corollary~\ref{cor_1.4} generalizes \cite[Corolaries~1 and 2]{Cheng-etal} both obtained by Cheng et al. in \cite{Cheng-etal}. Just take $T = I$ and $\eta = constant$ in our result. 
\end{remark}

\section{Preliminaries}

We begin by recalling some fundamental facts and the basic notation from the theory that will be used.

Here $\Omega\subset M$ is a bounded domain with smooth boundary $\partial\Omega$, $\eta$ is a real-valued function $C^2(M)$ and  $T$ is a symmetric positive definite $(1,1)$-tensor on $M$.
Let us consider $dm = e^{-\eta}d\Omega$ and $d\mu = e^{-\eta}d\partial\Omega$ the weight volume form on $\Omega$ and the volume form on the boundary $\partial\Omega$ induced by the outward unit normal vector $\nu$ on $\partial \Omega$, respectively.

We will identify a $(0,2)$-tensor $T:\mathfrak{X}(\Omega)\times\mathfrak{X}(\Omega)\to C^{\infty}(\Omega)$ with its associated $(1,1)$-tensor $T:\mathfrak{X}(\Omega)\to\mathfrak{X}(\Omega)$ by the equality
\begin{equation*}
    \langle T(X), Y \rangle = T(X, Y),
\end{equation*}
where $\mathfrak{X}(\Omega)$ is the space of smooth vector fields on $\Omega$. In particular, the tensor $\langle , \rangle$ will be identified with the identity $I$ in $\mathfrak{X}(\Omega)$.
Note that $\varepsilon I\leq T\leq \delta I$ on $\Omega$ implies
\begin{align}\label{T-property}
|T(X)|^2 \leq \delta  \langle T(X), X\rangle \quad \mbox{for all} \quad X \in \mathfrak{X}(\Omega),
\end{align}
in particular
\begin{align*}\label{T-norm}
    |T(\nabla\eta)|^2 \leq \delta^2|\nabla\eta|^2.
\end{align*}

Now, let $\alpha$ be the second fundamental form on an $n$-dimensional complete Riemannian manifold $(M^n, \langle , \rangle)$  isometrically immersed in $\mathbb{R}^m$. We know that ${\bf H}=\frac{1}{n}\tr (\alpha)$ is the mean curvature vector, then the generalized mean curvature vector, associated with a symmetric $(1, 1)$-tensor $T$, is the normal vector field defined by
\begin{equation}\label{mean-curvature}
    {\bf H}_T=\frac{1}{n}\sum_{i,j=1}^nT(e_i, e_j)\alpha(e_i, e_j)=\frac{1}{n}\sum_{i=1}^n\alpha(T(e_i), e_i):=\frac{1}{n}\tr{(\alpha\circ T)},
\end{equation}
where $\{e_1, e_2, \ldots, e_n\}$ is a local orthonormal frame of $TM$. Observe that, when $T=I$ then ${\bf H}_T={\bf H}$.

We will present some properties below, focusing on the real-valued functions and those of vector-valued functions.
    
From the definition of $\eta$-divergence (see Eq.~\eqref{EQ2.1}) and the usual properties of divergence of vector fields, one has
\begin{equation}\label{property1}
    \dv_\eta(fX)=f\dv_\eta X + \langle \nabla f, X \rangle
\end{equation}
and
\begin{equation*}
    \mathscr{L}(f\ell) = f\mathscr{L}\ell + 2 T(\nabla f, \nabla\ell) + \ell\mathscr{L}f,
\end{equation*}
for all real-valued functions $f, \ell \in C^\infty (\Omega)$.

Notice that the $(\eta,T)$-divergence form of $\mathscr{L}$ on $\Omega$ allows us to obtain the divergence theorem
\begin{equation}\label{property2}
   \int_\Omega\dv_\eta X dm = \int_{\partial\Omega} \langle X, \nu\rangle d\mu, 
   \end{equation}
in particular, for $X=T(\nabla f)$, 
\begin{equation*}
   \int_\Omega\mathscr{L}{f}dm = \int_{\partial\Omega}T(\nabla f, \nu) d\mu, 
\end{equation*}
and, the integration by parts formula
\begin{equation}\label{parts}
     \int_{\Omega}\ell\mathscr{L}{f}dm =-\int_\Omega T(\nabla\ell, \nabla f)dm + \int_{\partial\Omega}\ell T(\nabla f, \nu) d\mu,
\end{equation}
for all $\ell, f \in C^\infty(\Omega)$.

In addition, for the eigenfunction $u_i$ corresponding to the eigenvalue $\Gamma_i$, from Problem~\eqref{problem_1} and integration by parts, we have
\begin{equation}\label{lambda_i}
    \Gamma_i\int_\Omega u_i^2dm = \int_\Omega u_i\mathscr{L}^2u_idm =  \int_\Omega (\mathscr{L}u_i)^2dm.
\end{equation}

Considering ${\bf u}=(u^1, u^2, \ldots, u^n)$ a vector-valued function from $\Omega$ to $\mathbb{R}^n$, we define
\begin{eqnarray*}
\nabla {\bf u} = ( \nabla u^1, \ldots ,  \nabla u^n) \quad \mbox{and} \quad T(\nabla{\bf u}) = (T(\nabla u^1), \ldots , T(\nabla u^n)),
\end{eqnarray*}
so that
\begin{equation}\label{norm-Tu}
\|T (\nabla {\bf u})\|^2 = \int_\Omega \sum_{j=1}^n |T(\nabla u^j)|^2 dm = \int_\Omega |T(\nabla {\bf u})|^2 dm.
\end{equation}
For simplicity, we use the same notation $\|\cdot\|$ for the norm in Eq.~\eqref{norm-Tu} and for the canonical norm of a real-valued function in $L^2(\Omega,dm)$, that is, $\|{\bf u}\|^2= \displaystyle\int_\Omega |{\bf u}|^2 dm$. Besides, we are using the classical norm $|{\bf u}|^2 = \sum_{j=1}^n(u^j)^2$.

Moreover, we also see that
\begin{align}\label{definition}
    T(X, \nabla {\bf u}) &= ( T(X) \cdot \nabla { u}^1, \ldots, T(X) \cdot \nabla { u}^n )\nonumber\\
    &=( X\cdot T(\nabla u^1), \ldots, X\cdot T(\nabla u^n) )\nonumber\\
     &=(T(X, \nabla u^1), \ldots, T(X, \nabla u^n)).
\end{align}

So, the following equality is well understood for a vector-valued function ${\bf u}$ and a real-valued function $f \in C^\infty (\Omega)$
\begin{align}\label{equation2.2}
    &\mathscr{L}(f {\bf u}) = (\mathscr{L}(f u^1), \ldots, \mathscr{L}(f u^n))\nonumber\\
    &= (f\mathscr{L}u^1+2 T(\nabla f)\cdot \nabla u^1+ \mathscr{L}(f) u^1, \ldots, f\mathscr{L}u^n+2 T(\nabla f)\cdot \nabla u^n+ \mathscr{L}(f) u^n) \nonumber\\
    &=f(\mathscr{L}u^1, \ldots, \mathscr{L}u^n) + 2(T(\nabla f)\cdot \nabla u^1, \ldots, T(\nabla f)\cdot \nabla u^ne) + \mathscr{L}(f)( u^1, \ldots, u^n)\nonumber\\
    &=f\mathscr{L}{\bf u} + 2T(\nabla f, \nabla {\bf u}) + \mathscr{L}f{\bf u}.
\end{align}

 From Eqs.~\eqref{property1} and \eqref{property2}, for all vector-valued function ${\bf u}$ and ${\bf v}$ both from $\Omega$ to $\mathbb{R}^n$, with ${\bf v}$ vanishing on $\partial \Omega$,  we obtain
\begin{equation}\label{divformula}
    \int_\Omega {\bf v} \cdot \nabla(\dv_\eta {\bf u})dm = -\int_\Omega \dv_\eta {\bf u} \dv_\eta {\bf v} dm. 
\end{equation}

\section{Proof of the results about the coupled system of second-order elliptic differential equations}

In order to prove our first theorem, we will need a technical result. For this, let us first recall the following lemma. 

\begin{lem}\cite[Lemma~4.2]{AF-G}\label{lema2}
Let $\Omega\subset\mathbb{R}^n$ be a bounded domain, $\sigma_i$ be the $i$-th eigenvalue of Problem~\eqref{problem1} and ${\bf u}_i$ be a normalized vector-valued eigenfunction corresponding to $\sigma_i$. Then, for any positive integer $k$, we get
\begin{align*}
     \sum_{i=1}^k(\sigma_{k+1}-&\sigma_i)^2 \leq \frac{4(n\delta+\alpha)}{n^2\varepsilon^2} \sum_{i=1}^k(\sigma_{k+1}-\sigma_i)\Big\{\frac{1}{4}\int_{\Omega}|{\bf u}_i|^2|\tr(\nabla T)-T(\nabla \eta)|^2dm \nonumber\\
    &+ \int_\Omega {\bf u}_i\cdot{\Big (} T(\tr(\nabla T), \nabla {\bf u}_i) - T(T(\nabla \eta), \nabla {\bf u}_i){\Big )}dm + \|T(\nabla {\bf u}_i)\|^2\Big\}.
\end{align*}
\end{lem}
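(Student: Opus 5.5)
The plan is to run the Rayleigh--Ritz argument of Yang type with the coordinate functions $x_l$ as multipliers, carried over to vector fields and to the weighted measure $dm$. For $l=1,\dots,n$ and $i=1,\dots,k$ we take the trial fields
\[
\boldsymbol{\varphi}_{li}=x_l{\bf u}_i-\sum_{j=1}^k a_{lij}{\bf u}_j,\qquad a_{lij}=\int_\Omega x_l\,{\bf u}_i\cdot{\bf u}_j\,dm=a_{lji}.
\]
Each $\boldsymbol{\varphi}_{li}$ vanishes on $\partial\Omega$ and is orthogonal to ${\bf u}_1,\dots,{\bf u}_k$. By the min--max characterization, and writing $\mathcal{A}=\mathscr{L}+\alpha\nabla\dv_\eta$,
\[
\sigma_{k+1}\|\boldsymbol{\varphi}_{li}\|^2\le-\int_\Omega\boldsymbol{\varphi}_{li}\cdot\mathcal{A}\boldsymbol{\varphi}_{li}\,dm .
\]
We expand $\mathcal{A}(x_l{\bf u}_i)$ using \eqref{equation2.2} with $f=x_l$, together with \eqref{property1}. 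Since $\mathscr{L}x_l=\dv_\eta(T(e_l))=\big(\tr(\nabla T)-T(\nabla\eta)\big)_l$, this gives
\[
\mathcal{A}(x_l{\bf u}_i)=-\sigma_i x_l{\bf u}_i+{\bf w}_{li},\qquad
{\bf w}_{li}=2T(e_l,\nabla{\bf u}_i)+(\mathscr{L}x_l){\bf u}_i+\alpha\big(\nabla u_i^l+e_l\dv_\eta{\bf u}_i\big).
\]
Setting $b_{lij}=\int_\Omega{\bf u}_j\cdot{\bf w}_{li}\,dm$, self-adjointness and \eqref{divformula} yield $b_{lij}=(\sigma_i-\sigma_j)a_{lij}$. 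The usual bookkeeping then gives
\[
(\sigma_{k+1}-\sigma_i)\|\boldsymbol{\varphi}_{li}\|^2\le-\int_\Omega x_l{\bf u}_i\cdot{\bf w}_{li}\,dm+\sum_j(\sigma_i-\sigma_j)a_{lij}^2 .
\]

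Next we compute the diagonal "energy" term. Integrating by parts with \eqref{parts} and \eqref{divformula}, and using $|{\bf u}_i|^2$ with ${\bf u}_i=0$ on $\partial\Omega$, all terms involving $x_l$ cancel. We expect
\[
-\int_\Omega x_l{\bf u}_i\cdot{\bf w}_{li}\,dm=\int_\Omega T(e_l,e_l)|{\bf u}_i|^2dm+\alpha\int_\Omega(u_i^l)^2dm .
\]
Summing over $l$ and using $T\le\delta I$ together with $\|{\bf u}_i\|=1$, this is at most $n\delta+\alpha$. This is the source of the factor $(n\delta+\alpha)$.

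For the other side we introduce $z_{li}=T(e_l,\nabla{\bf u}_i)+\tfrac12(\mathscr{L}x_l){\bf u}_i$ and $c_{lij}=\int_\Omega z_{li}\cdot{\bf u}_j\,dm$. An integration by parts of the same kind shows that the $x_l\mathscr{L}x_l$ terms cancel, leaving
\[
-2\int_\Omega x_l{\bf u}_i\cdot z_{li}\,dm=\int_\Omega T(e_l,e_l)|{\bf u}_i|^2dm .
\]
Summing over $l$ and using $T\ge\varepsilon I$, this is at least $n\varepsilon$. Because $c_{lij}=-c_{lji}$, we may replace $x_l{\bf u}_i$ by $\boldsymbol{\varphi}_{li}$ at the cost of the terms $\sum_j a_{lij}c_{lij}$. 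We then multiply by $(\sigma_{k+1}-\sigma_i)^2$ and apply Cauchy--Schwarz with the weight $(\sigma_{k+1}-\sigma_i)$. Summing over $i$ and $l$, the antisymmetry kills the cross sums, exactly as in Yang's argument. The outcome should be
\[
n\varepsilon\sum_i(\sigma_{k+1}-\sigma_i)^2\le 2\Big(\sum_{i,l}(\sigma_{k+1}-\sigma_i)^3\|\boldsymbol{\varphi}_{li}\|^2\Big)^{1/2}\Big(\sum_{i,l}(\sigma_{k+1}-\sigma_i)\|z_{li}\|^2\Big)^{1/2},
\]
and inserting the bound on $\sum_l(\sigma_{k+1}-\sigma_i)\|\boldsymbol{\varphi}_{li}\|^2$ from the previous step gives the factor $4(n\delta+\alpha)/(n^2\varepsilon^2)$. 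Finally, expanding $\sum_l\|z_{li}\|^2$ produces exactly the three terms in the brace of the lemma, namely $\|T(\nabla{\bf u}_i)\|^2$, the cross term $\int_\Omega{\bf u}_i\cdot T\big(\tr(\nabla T)-T(\nabla\eta),\nabla{\bf u}_i\big)dm$, and $\tfrac14\int_\Omega|{\bf u}_i|^2|\tr(\nabla T)-T(\nabla\eta)|^2dm$.

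I expect the main obstacle to be the two integrations by parts for the weighted divergence $\dv_\eta(x_lT(e_l))$ and for the $\alpha$ terms. In both, one has to check that the $x_l$-dependent pieces cancel exactly, which forces the specific choice of $\tfrac12\mathscr{L}x_l$ in $z_{li}$. One also has to check that the $\alpha\,e_l\dv_\eta{\bf u}_i$ contributions reduce to $\alpha\int_\Omega(u_i^l)^2dm$ rather than leaving a residual divergence term.
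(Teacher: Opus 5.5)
The paper gives no proof of this lemma; it imports it from \cite[Lemma~4.2]{AF-G}. Your Yang-type Rayleigh--Ritz argument with the trial fields $x_l{\bf u}_i-\sum_j a_{lij}{\bf u}_j$ is the natural reconstruction, and its computational core is correct. The following all check out:
\begin{itemize}
\item $\mathscr{L}x_l=(\tr(\nabla T)-T(\nabla\eta))_l$ and $b_{lij}=(\sigma_i-\sigma_j)a_{lij}$;
\item the identities $-\int_\Omega x_l{\bf u}_i\cdot{\bf w}_{li}\,dm=P_{li}$ and $-2\int_\Omega x_l{\bf u}_i\cdot z_{li}\,dm=Q_{li}$, where $Q_{li}=\int_\Omega T(e_l,e_l)|{\bf u}_i|^2dm$ and $P_{li}=Q_{li}+\alpha\int_\Omega(u_i^l)^2dm$;
\item the antisymmetry $c_{lij}=-c_{lji}$;
\item the expansion of $\sum_l\|z_{li}\|^2$ into exactly the bracket of the lemma.
\end{itemize}

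The one step that does not work as you describe it is the cross-term bookkeeping. Antisymmetry does not kill
\[
\sum_{i,j}(\sigma_{k+1}-\sigma_i)^2a_{lij}c_{lij}=-\frac12\sum_{i,j}(2\sigma_{k+1}-\sigma_i-\sigma_j)(\sigma_i-\sigma_j)a_{lij}c_{lij},
\]
which has no definite sign. Even in the scalar case ($\alpha=0$, where ${\bf w}_{li}=2z_{li}$) this sum is not killed; it is absorbed using $2c_{lij}=(\sigma_i-\sigma_j)a_{lij}$. That relation fails here. Since ${\bf w}_{li}=2z_{li}+\alpha(\nabla u_i^l+e_l\dv_\eta{\bf u}_i)$, one gets
\[
2c_{lij}=(\sigma_i-\sigma_j)a_{lij}-\alpha\int_\Omega(u_j^l\dv_\eta{\bf u}_i-u_i^l\dv_\eta{\bf u}_j)\,dm .
\]
Consequently your intermediate display involving $\sum_{i,l}(\sigma_{k+1}-\sigma_i)^3\|\boldsymbol{\varphi}_{li}\|^2$ is not justified as written.

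The repair is to keep a free parameter $B>0$ and absorb the cross terms before optimizing. Start from
\[
(\sigma_{k+1}-\sigma_i)^2\Big(Q_{li}+2\sum_j a_{lij}c_{lij}\Big)\le B(\sigma_{k+1}-\sigma_i)^2\Big(P_{li}+\sum_j(\sigma_i-\sigma_j)a_{lij}^2\Big)+\frac{\sigma_{k+1}-\sigma_i}{B}\Big(\|z_{li}\|^2-\sum_j c_{lij}^2\Big).
\]
After summing over $i$ and symmetrizing, the three cross sums (the $a_{lij}c_{lij}$ sum on the left, and the $a_{lij}^2$ and $c_{lij}^2$ sums on the right) combine into
\[
-\frac12\sum_{i,j=1}^k(2\sigma_{k+1}-\sigma_i-\sigma_j)\Big(\sqrt{B}(\sigma_i-\sigma_j)a_{lij}-\frac{c_{lij}}{\sqrt{B}}\Big)^2\le 0 .
\]
This uses only $a_{lij}=a_{lji}$ and $c_{lij}=-c_{lji}$, with no relation between $c$ and $a$. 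Summing over $l$ and using $\int_\Omega\tr(T)|{\bf u}_i|^2dm\ge n\varepsilon$ gives
\[
n\varepsilon\sum_i(\sigma_{k+1}-\sigma_i)^2\le B(n\delta+\alpha)\sum_i(\sigma_{k+1}-\sigma_i)^2+B^{-1}\sum_i(\sigma_{k+1}-\sigma_i)\sum_l\|z_{li}\|^2 .
\]
Choosing $B=n\varepsilon/(2(n\delta+\alpha))$ then yields the stated constant $4(n\delta+\alpha)/(n^2\varepsilon^2)$.
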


Now, let us rewrite the right side of inequality of the previous lemma in a more convenient way for us.
\begin{lem}\label{lemma2.3}
Under the same setup as in Lemma~\ref{lema2}, we get
\begin{align*}
     \sum_{i=1}^k(\sigma_{k+1}-\sigma_i)^2 \leq& \frac{4(n\delta+\alpha)}{n^2\varepsilon^2} \sum_{i=1}^k(\sigma_{k+1}-\sigma_i)\Big\{\|T(\nabla {\bf u}_i)\|^2 + \frac{1}{4}\int_{\Omega}|{\bf u}_i|^2| \tr(\nabla T)|^2dm \\  
     &+\int_{\Omega}|{\bf u}_i|^2\Big[\frac{1}{2}\dv \Big(T^2(\nabla \eta)-T(\tr(\nabla T))\Big) - \frac{1}{4}|T(\nabla \eta)|^2\Big] dm
   \Big\}.
\end{align*}
.
\end{lem}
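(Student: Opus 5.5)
The plan is to show that the expression in braces on the right-hand side of Lemma~\ref{lema2} is \emph{equal} to the one in Lemma~\ref{lemma2.3}, for each $i$. The inequality then follows immediately, since all the prefactors, including $\sigma_{k+1}-\sigma_i\geq 0$, are unchanged. Throughout I write $V:=\tr(\nabla T)-T(\nabla\eta)$. Then $T^2(\nabla\eta)-T(\tr(\nabla T))=-T(V)$.

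\textbf{Step 1: expand the quadratic term.} I first expand
\[
\tfrac14|V|^2=\tfrac14|\tr(\nabla T)|^2-\tfrac12\,\tr(\nabla T)\cdot T(\nabla\eta)+\tfrac14|T(\nabla\eta)|^2 .
\]

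\textbf{Step 2: rewrite the mixed term.} By \eqref{definition} and linearity, the mixed integrand in Lemma~\ref{lema2} is
\[
{\bf u}_i\cdot T(V,\nabla{\bf u}_i)=\sum_j u_i^j\, T(V)\cdot\nabla u_i^j=\tfrac12\, T(V)\cdot\nabla|{\bf u}_i|^2 .
\]
Using \eqref{property1} with $f=|{\bf u}_i|^2$ and $X=T(V)$, together with the divergence theorem \eqref{property2}, the boundary term vanishes because ${\bf u}_i=0$ on $\partial\Omega$. This gives
\[
\int_\Omega {\bf u}_i\cdot T(V,\nabla{\bf u}_i)\,dm=-\tfrac12\int_\Omega|{\bf u}_i|^2\dv_\eta(T(V))\,dm .
\]

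\textbf{Step 3: expand the weighted divergence.} Since $T$ is symmetric, $\langle\nabla\eta,T(V)\rangle=\langle T(\nabla\eta),V\rangle$. Hence
\[
-\tfrac12\dv_\eta(T(V))=\tfrac12\dv\big(T(T(\nabla\eta)-\tr(\nabla T))\big)+\tfrac12\,T(\nabla\eta)\cdot\tr(\nabla T)-\tfrac12|T(\nabla\eta)|^2 .
\]

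\textbf{Step 4: combine.} Adding Steps 1 and 3 pointwise under $\int_\Omega|{\bf u}_i|^2(\cdot)\,dm$, the two cross terms $\mp\frac12\,\tr(\nabla T)\cdot T(\nabla\eta)$ cancel exactly. The terms in $|T(\nabla\eta)|^2$ combine to $\frac14-\frac12=-\frac14$. What remains is
\[
\tfrac14|\tr(\nabla T)|^2+\tfrac12\dv\big(T^2(\nabla\eta)-T(\tr(\nabla T))\big)-\tfrac14|T(\nabla\eta)|^2 ,
\]
which is exactly the integrand in Lemma~\ref{lemma2.3}. The term $\|T(\nabla{\bf u}_i)\|^2$ is carried over unchanged.

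The only delicate point is Step 2: the componentwise identification of the mixed term as a gradient of $|{\bf u}_i|^2$, and the correct sign bookkeeping in the weighted integration by parts. In particular, one must use the symmetry of $T$ to move it onto $\nabla\eta$ so that the cross terms cancel.
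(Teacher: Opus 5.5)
Your proposal is correct and follows essentially the same route as the paper: expand $|\tr(\nabla T)-T(\nabla\eta)|^2$, integrate the mixed terms by parts using ${\bf u}_i|_{\partial\Omega}=0$, and rewrite $\dv_\eta$ as $\dv$ minus the drift term using the symmetry of $T$, so that the cross terms $\pm\frac12\langle\tr(\nabla T),T(\nabla\eta)\rangle$ cancel. The only difference is cosmetic: you bundle the two mixed terms into the single field $T(V)$ and integrate by parts once, whereas the paper handles $T^2(\nabla\eta)$ and $T(\tr(\nabla T))$ separately.
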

\begin{proof} We make use of Lemma~\ref{lema2}. For this, we must notice that
\begin{align*}
    |\tr(\nabla T)-T(\nabla \eta)|^2 = |\tr(\nabla T)|^2 - 2\langle \tr(\nabla T), T(\nabla \eta) \rangle + |T(\nabla \eta)|^2,
\end{align*}
hence
\begin{align}\label{eqq2.23}
     \frac{1}{4}&\int_{\Omega}|{\bf u}_i|^2|\tr(\nabla T)-T(\nabla \eta)|^2 dm + \int_\Omega{\bf u}_i \cdot (T(\tr(\nabla T), \nabla {\bf u}_i) -  T(T(\nabla \eta), \nabla {\bf u}_i)))dm \nonumber \\
     =&\frac{1}{4}\int_{\Omega}|{\bf u}_i|^2|T(\nabla \eta)|^2 dm -\int_\Omega{\bf u}_i \cdot T( T(\nabla \eta), \nabla {\bf u}_i) dm  +\frac{1}{4}\int_{\Omega}|{\bf u}_i|^2|\tr(\nabla T)|^2dm\nonumber\\
     &- \frac{1}{2}\int_\Omega|{\bf u}_i|^2\langle \tr(\nabla T), T (\nabla \eta)\rangle dm +\int_\Omega{\bf u}_i \cdot T(\tr(\nabla T), \nabla {\bf u}_i)dm.
\end{align}
Since ${\bf u}_i|_{\partial \Omega}=0$, by Eq.~\eqref{definition} and the divergence theorem, we have
\begin{align}\label{Eq_3.2}
    -&\int_\Omega{\bf u}_i \cdot T(T(\nabla \eta),\nabla {\bf u}_i) dm\nonumber\\
    &= -\int_\Omega u_i^1 \langle T^2(\nabla \eta), \nabla u_i^1\rangle dm - \cdots  -\int_\Omega u_i^n \langle T^2(\nabla \eta), \nabla u_i^n\rangle dm \nonumber\\
   &= -\frac{1}{2}\int_\Omega \langle T^2(\nabla \eta), \nabla (u_i^1)^2\rangle dm - \cdots  -\frac{1}{2}\int_\Omega \langle T^2(\nabla \eta), \nabla (u_i^n)^2\rangle dm \nonumber\\
    &=\frac{1}{2}\int_\Omega (u_i^1)^2 \dv_\eta (T^2(\nabla \eta))dm + \cdots + \frac{1}{2}\int_\Omega (u_i^n)^2 \dv_\eta (T^2(\nabla \eta))dm \nonumber\\
    &=\frac{1}{2}\int_\Omega |{\bf u}_i|^2 \dv_\eta (T^2(\nabla \eta))dm,
\end{align}
ad analogously, we also have
\begin{align*}
    \int_\Omega{\bf u}_i \cdot &T(\tr(\nabla T), \nabla {\bf u}_i) dm = -\frac{1}{2}\int_\Omega |{\bf u}_i|^2 \dv_\eta (T(\tr (\nabla T)))dm\\
    &= -\frac{1}{2}\int_\Omega |{\bf u}_i|^2 \dv(T(\tr (\nabla T)))dm +  \frac{1}{2}\int_\Omega|{\bf u}_i|^2\langle \tr(\nabla T), T (\nabla \eta)\rangle dm.
\end{align*}
Substituting Eq.~\eqref{Eq_3.2} and the previous equality in Eq.~\eqref{eqq2.23}, we get

\begin{align*}
     \frac{1}{4}&\int_{\Omega}|{\bf u}_i|^2|\tr(\nabla T)-T(\nabla \eta)|^2 dm + \int_\Omega{\bf u}_i\cdot (T(\tr(\nabla T), \nabla {\bf u}_i) - T( T(\nabla \eta), \nabla {\bf u}_i))dm \nonumber \\
     =&\int_{\Omega}|{\bf u}_i|^2{\Big(}\frac{1}{4}|T(\nabla \eta)|^2 +\frac{1}{2}\dv_\eta (T^2(\nabla \eta)){\Big)}dm   +\frac{1}{4}\int_{\Omega}|{\bf u}_i|^2|\tr(\nabla T)|^2 dm \nonumber\\
     &-\frac{1}{2}\int_\Omega |{\bf u}_i|^2 \dv(T(\tr (\nabla T)))dm .
\end{align*}

Since $ \dv_\eta (T^2(\nabla \eta)) = \dv (T^2(\nabla \eta)) - |T(\nabla \eta)|^2$, by the previous equality, we have
\begin{align}\label{eqq2.24}
     &\frac{1}{4}\int_{\Omega}|{\bf u}_i|^2|\tr(\nabla T)-T(\nabla \eta)|^2 dm + \int_\Omega{\bf u}_i\cdot (T(\tr(\nabla T), \nabla {\bf u}_i) - T( T(\nabla \eta), \nabla {\bf u}_i)))dm \nonumber \\
     &=\frac{1}{4}\int_{\Omega}|{\bf u}_i|^2|\tr(\nabla T)|^2 dm + \int_{\Omega}|{\bf u}_i|^2\Big[\frac{1}{2}\dv \Big(T^2(\nabla \eta)-T(\tr(\nabla T))\Big) - \frac{1}{4}|T(\nabla \eta)|^2\Big] dm.
\end{align}
Replacing Eq.~\eqref{eqq2.24} into Lemma~\ref{lema2}, we complete the proof of Lemma~\ref{lemma2.3}.
\end{proof}

Now, we are in a position to give the proofs of the results of the Section~\ref{sec_1}.
\begin{proof}[\bf Proof of Theorem~\ref{theorem1.1}]
The proof is a consequence of Lemma~\ref{lemma2.3}. Since $T_0=\sup_{\Omega}|\tr(\nabla T)|$, $\eta_0=\sup_{\Omega}|\nabla \eta|$ and 
\begin{equation*}
    C_0=\sup_{\Omega}\Big\{\frac{1}{2}\dv \Big(T^2(\nabla \eta)-T(\tr(\nabla T))\Big) - \frac{1}{4}|T(\nabla \eta)|^2\Big\},
\end{equation*} 
we have
\begin{align}\label{EQ_4.1}
    \frac{1}{4} \int_{\Omega} |{\bf u}_i|^2 | \tr(\nabla T)|^2dm \leq \frac{1}{4}T_0^2\int_\Omega |{\bf u}_i|^2 dm =\frac{1}{4} T_0^2,
\end{align}

\begin{equation*}
    \int_{\Omega}|{\bf u}_i|^2\Big[\frac{1}{2}\dv \Big(T^2(\nabla \eta)-T(\tr(\nabla T))\Big) - \frac{1}{4}|T(\nabla \eta)|^2\Big] dm \leq C_0.
\end{equation*}
From Problem~\eqref{problem1} and Eqs.~\eqref{parts} and \eqref{divformula} we obtain 
\begin{equation*}
    \sigma_i=\int_{\Omega}T(\nabla {\bf u}_i) \cdot \nabla {\bf u}_i dm + \alpha \|\dv_\eta{\bf u}_i\|^2.
\end{equation*}
Since there exist positive real numbers $\varepsilon$ and $\delta$ such that $\varepsilon I \leq T \leq \delta I$, from the previous equality and inequality~\eqref{T-property}, we get
\begin{equation}\label{equation5-4}
    \|T(\nabla {\bf u}_i)\|^2 \leq \delta  \int_{\Omega}T(\nabla {\bf u}_i) \cdot \nabla {\bf u}_i dm = \delta(\sigma_i - \alpha \|\dv_\eta{\bf u}_i\|^2).
\end{equation}
Therefore, substituting \eqref{EQ_4.1}-\eqref{equation5-4} in Lemma~\ref{lemma2.3} we obtain
\begin{equation*}
     \sum_{i=1}^k(\sigma_{k+1}-\sigma_i)^2 \leq \frac{4(n\delta+\alpha)}{n^2\varepsilon^2} \sum_{i=1}^k(\sigma_{k+1}-\sigma_i)\Big\{\delta(\sigma_i - \alpha \|\dv_\eta{\bf u}_i\|^2)  + \frac{T_0^2}{4}+C_0\Big\},
\end{equation*}
which is enough to complete the proof.
\end{proof}

\begin{proof}[\bf Proof of Theorem~\ref{theorem1.2}]
Let $\{x_\beta \}_{\beta=1}^n$ be the standard coordinate functions of $\mathbb{R}^n$. Let us consider the matrix $D=(d_{ij})_{n \times n}$ where
\begin{equation*}
    d_{ij}:= \int_\Omega x_i{\bf u}_1\cdot {\bf u}_{j+1} dm.
\end{equation*}
Using the orthogonalization of Gram-Schmidt, we know that there exists an upper triangle matrix $R=(r_{ij})_{n \times n}$ and an orthogonal matrix $S=(s_{ij})_{n \times n}$ such that $R=SD$, namely
\begin{equation*}
    r_{ij}=\sum_{k=1}^ns_{ik}d_{kj}= \sum_{k=1}^n s_{ik} \int_\Omega x_k{\bf u}_1\cdot {\bf u}_{j+1} dm = \int_\Omega \Big( \sum_{k=1}^ns_{ik}x_k\Big){\bf u}_1\cdot {\bf u}_{j+1} dm = 0,
\end{equation*}
for $1 \leq j < i \leq n$. Putting $y_i=\sum_{k=1}^ns_{ik}x_k$, we have 
\begin{equation*}
    \int_\Omega y_i{\bf u}_1\cdot {\bf u}_{j+1} dm = 0,   \quad \mbox{for} \quad 1 \leq j < i \leq n.
\end{equation*}
Let us denote by $\displaystyle a_i=\int_\Omega y_i |{\bf u}_1|^2dm$ to consider the vector-valued functions ${\bf w}_i$ given by
\begin{equation}\label{wi}
    {\bf w}_i = (y_i - a_i){\bf u}_1,
\end{equation}
so that 
\begin{equation*}
    {\bf w}_i|_{\partial \Omega}=0 \quad \mbox{and} \quad  \int_\Omega {\bf w}_i\cdot {\bf u}_{j+1} dm =0, \quad \mbox{for any} \quad j= 1, \ldots, i-1.
\end{equation*}
Then, using the Rayleigh quotient and formula~\eqref{divformula} we obtain
\begin{equation}\label{equationn4.19}
    \sigma_{i+1} \|{\bf w}_i\|^2 \leq \int_\Omega ( -{\bf w}_i \cdot \mathscr{L}{\bf w}_i +  \alpha(\dv_\eta{\bf w}_i)^2)dm.
\end{equation}
 By \eqref{equation2.2}  we get
\begin{align}\label{equationn4.20}
    &-\int_\Omega {\bf w}_i\cdot \mathscr{L}{\bf w}_idm =  -\int_\Omega {\bf w}_i\cdot\big[(y_i - a_i) \mathscr{L}{\bf u}_1 + {\bf u}_1 \mathscr{L}y_i + 2T(\nabla y_i, \nabla {\bf u}_1)\big]dm \nonumber\\
    &=  -\int_\Omega {\bf w}_i\cdot \big[(y_i - a_i)(-\sigma_1{\bf u}_1-\alpha\nabla \dv_\eta{\bf u}_1 ) + {\bf u}_1 \mathscr{L}y_i + 2T(\nabla y_i, \nabla {\bf u}_1)\big]dm \nonumber \\
    &= \sigma_1\|{\bf w}_i\|^2 +\alpha\int_\Omega (y_i-a_i){\bf w}_i \cdot \nabla \dv_\eta{\bf u}_1 dm - \int_\Omega {\bf w}_i\cdot ({\bf u}_1 \mathscr{L}y_i + 2T(\nabla y_i, \nabla {\bf u}_1))dm.
\end{align}
Using \eqref{property1} and \eqref{property2}, with direct computation, we obtain
\begin{align*}
   \alpha \int_\Omega (y_i-a_i){\bf w}_i \cdot \nabla \dv_\eta{\bf u}_1 dm =& - \alpha \int_\Omega (\dv_\eta{\bf w}_i)^2dm \\&- \alpha \int_\Omega (\nabla (\nabla y_i\cdot {\bf u}_1) + \dv_\eta {\bf u}_1\nabla y_i)\cdot {\bf w}_i dm.
\end{align*}
Substituting the previous equality into \eqref{equationn4.20}, we get
\begin{align}\label{equationn4.21}
    \int_\Omega(-{\bf w}_i\cdot \mathscr{L}{\bf w}_i+&\alpha(\dv_\eta{\bf w}_i)^2)dm\nonumber\\
    =&\sigma_1\|{\bf w}_i\|^2 - \alpha \int_\Omega (\nabla (\nabla y_i\cdot {\bf u}_1) + \dv_\eta {\bf u}_1\nabla y_i)\cdot {\bf w}_idm \nonumber\\
    &- \int_\Omega {\bf w}_i\cdot ({\bf u}_1 \mathscr{L}y_i + 2T(\nabla y_i, \nabla {\bf u}_1))dm.
\end{align}
Replacing \eqref{equationn4.21} into \eqref{equationn4.19}, we have
\begin{align}\label{equationn4.22}
    (\sigma_{i+1} - \sigma_1)\|{\bf w}_i\|^2 \leq & 
    -\int_\Omega {\bf w}_i\cdot ({\bf u}_1 \mathscr{L}y_i + 2T(\nabla y_i, \nabla {\bf u}_1))dm \nonumber\\
    &-\alpha \int_\Omega {\bf w}_i \cdot (\nabla (\nabla y_i\cdot {\bf u}_1) + \dv_\eta {\bf u}_1\nabla y_i) dm.
\end{align}
By a straightforward computation, we have, from \eqref{property1}, \eqref{property2}, \eqref{parts} and \eqref{wi},
\begin{align}\label{equationn4.23}
    - \int_\Omega {\bf w}_i\cdot ({\bf u}_1 \mathscr{L}y_i + 2T(\nabla y_i, \nabla {\bf u}_1))dm =  \int_\Omega |{\bf u}_1|^2T(\nabla y_i, \nabla y_i) dm,
\end{align}
and
\begin{align}\label{equationn4.24}
    - \alpha \int_\Omega {\bf w}_i \cdot (\nabla (\nabla y_i\cdot {\bf u}_1) + \dv_\eta {\bf u}_1\nabla y_i)dm = \alpha\int_\Omega |\nabla y_i \cdot {\bf u}_1|^2dm.
\end{align}
Therefore, substituting \eqref{equationn4.23} and \eqref{equationn4.24} into \eqref{equationn4.22} we obtain
\begin{align}\label{equationn4.25}
    (\sigma_{i+1} - \sigma_1)\|{\bf w}_i\|^2 \leq \int_\Omega |{\bf u}_1|^2T(\nabla y_i, \nabla y_i) dm + \alpha\int_\Omega |\nabla y_i \cdot {\bf u}_1|^2dm.
\end{align}
From \eqref{equationn4.23}, for any constant $B >0$,  we infer
\begin{align*}
    (\sigma_{i+1} - \sigma_1)&\int_\Omega |{\bf u}_1|^2T(\nabla y_i, \nabla y_i) dm \\
    =&  (\sigma_{i+1} - \sigma_1)\Big\{ - 2\int_\Omega {\bf w}_i\cdot \Big(\frac{1}{2}{\bf u}_1 \mathscr{L}y_i + T(\nabla y_i, \nabla {\bf u}_1)\Big)dm\Big\} \\
    \leq& 2 (\sigma_{i+1} - \sigma_1)\|{\bf w}_i\|\Big\| \frac{1}{2}{\bf u}_1 \mathscr{L}y_i + T(\nabla y_i, \nabla {\bf u}_1) \Big\| \\
    \leq& B(\sigma_{i+1} - \sigma_1)^2\|{\bf w}_i\|^2 + \frac{1}{B}\Big\| \frac{1}{2}{\bf u}_1 \mathscr{L}y_i + T(\nabla y_i, \nabla {\bf u}_1) \Big\|^2,
\end{align*}
hence using \eqref{equationn4.25} and the previous inequality we get
\begin{align}\label{equationn4.26}
    (\sigma_{i+1} - \sigma_1)&\int_\Omega |{\bf u}_1|^2T(\nabla y_i, \nabla y_i) dm \nonumber\\
\leq& B(\sigma_{i+1} - \sigma_1) \Big( \int_\Omega |{\bf u}_1|^2T(\nabla y_i, \nabla y_i) dm + \alpha\int_\Omega |\nabla y_i \cdot {\bf u}_1|^2dm \Big) \nonumber\\
    & +\frac{1}{B}\Big\| \frac{1}{2}{\bf u}_1 \mathscr{L}y_i + T(\nabla y_i, \nabla {\bf u}_1) \Big\|^2.
\end{align}
Summing over $i$ from $1$ to $n$ in \eqref{equationn4.26}, we conclude that
\begin{align}\label{equationn4.27}
\sum_{i=1}^n&(\sigma_{i+1} - \sigma_1)(1-B)\int_\Omega |{\bf u}_1|^2T(\nabla y_i, \nabla y_i) dm \nonumber\\ &\leq B\alpha\sum_{i=1}^n(\sigma_{i+1} - \sigma_1)\int_\Omega |\nabla y_i \cdot {\bf u}_1|^2dm
+\frac{1}{B}\sum_{i=1}^n\Big\| \frac{1}{2}{\bf u}_1 \mathscr{L}y_i + T(\nabla y_i, \nabla {\bf u}_1) \Big\|^2.
\end{align}
From the definition of $y_i$ and the fact that $S$ is an orthogonal matrix, we know that $\{y_i\}_{i=1}^n$ are also the coordinate functions in $\mathbb{R}^n$. Then, as in the proof of~\cite[Theorem~1.1]{AF-G} (see inequality 4.18)  we can also get
\begin{align*}
   0 < \sum_{i=1}^n\Big\| \frac{1}{2}{\bf u}_1 \mathscr{L}y_i &+ T(\nabla y_i, \nabla {\bf u}_1) \Big\|^2 \leq \Bigg\{\frac{1}{4}\int_{\Omega}|{\bf u}_1|^2|\tr(\nabla T)-T(\nabla \eta)|^2 dm\nonumber\\
   &+ \!\int_\Omega\!{\bf u}_1\cdot{\Big (}T(\tr(\nabla T), \!\nabla {\bf u}_1) - T(T(\nabla \eta), \!\nabla {\bf u}_1){\Big )}dm + \|T (\nabla {\bf u}_1)\|^2 \!\Bigg\}.
\end{align*}
Analogous to the proofs of Lemma~\ref{lemma2.3} and Theorem~\ref{theorem1.1}, we have

\begin{align}\label{equationn4.15}
    \Bigg\{\frac{1}{4}\int_{\Omega}|{\bf u}_1|^2|\tr(\nabla T)-T(\nabla \eta)|^2 dm& + \|T (\nabla {\bf u}_1)\|^2\nonumber\\
   + \int_\Omega{\bf u}_1\cdot{\Big (}T(\tr(\nabla T), \nabla {\bf u}_1) &- T(T(\nabla \eta), \nabla {\bf u}_1){\Big )}dm  \Bigg \} \nonumber\\
   &\leq \Big\{\delta(\sigma_1 - \alpha \|\dv_\eta{\bf u}_1\|^2)  + \frac{T_0}{4}+C_0\Big\}.
\end{align}

Substituting \ref{equationn4.15} into \ref{equationn4.27} and using $\varepsilon I \leq T \leq \delta I$, we obtain

\begin{align}\label{equationn4.28}
    \sum_{i=1}^n(\sigma_{i+1} - \sigma_1)(\varepsilon - B(\delta + \alpha)) \leq \frac{1}{B}\Big\{\delta(\sigma_1 - \alpha \|\dv_\eta{\bf u}_1\|^2)  + \frac{T_0}{4}+C_0\Big\}.
\end{align}
Since $B$ is an arbitrary positive constant, we can take 
\begin{equation*}
    B=\Bigg\{\frac{\delta(\sigma_1 - \alpha \|\dv_\eta{\bf u}_1\|^2)  + \frac{T_0}{4}+C_0}{(\delta+ \alpha)\sum_{i=1}^n(\sigma_{i+1} - \sigma_1)}\Bigg\}^{\frac{1}{2}}
\end{equation*}
into \eqref{equationn4.28} and therefore we get 
\begin{equation}\label{equation5.16}
    \sum_{i=1}^n(\sigma_{i+1} - \sigma_1) \leq \frac{4(\delta+\alpha)}{\varepsilon^2}\Big\{  \delta(\sigma_1 - \alpha \|\dv_\eta{\bf u}_1\|^2)  + \frac{T_0}{4}+C_0\Big\}.
\end{equation}
We can take $i = 1$ in inequality \eqref{equation5-4} and replace in \eqref{equation5.16} to obtain Theorem~\ref{theorem1.2}.
\end{proof}

\begin{proof}[\bf Proof of Corollary~\ref{Cor_1}]
By setting $v_i := \sigma_i + D_0$ into Theorem~\ref{Cor_1} we get
\begin{equation}\label{equation1.13}
    \sum_{i=1}^k(v_{k+1}-v_i)^2 \leq \frac{4\delta(n\delta+\alpha)}{ n^2\varepsilon^2}\sum_{i=1}^k(v_{k+1}-v_i)v_i. 
\end{equation}
We know that each $v_i>0$, and from~\eqref{equationn1.3}, $v_1 \leq v_2 \leq \cdots \leq v_{k+1}$. Notice that \eqref{equation1.13} is a quadratic inequality of $v_{k+1}$. Then, we can follow the steps of the proof of Inequalities~(1.3) and (1.4) in Gomes and Miranda \cite{GomesMiranda} to obtain the inequalities of the corollary.
\end{proof}

\section{Proof of the results about the fourth-order elliptic operators}

In this section, the keystone to our proofs relies on a slight modification from \cite[Lemma 3.2]{AF} by Araújo Filho, namely:

\begin{lem}\label{lemma_2}
Let $\Omega$ be a bounded domain in an $n$-dimensional complete Riemannian manifold $M$ isometrically immersed in $\mathbb{R}^m$, $\Gamma_i$ be the $i$-th eigenvalue of Problem~\eqref{problem_1} and $u_i$ be a normalized real-valued eigenfunction corresponding to $\Gamma_i$. Then we have
\begin{align}\label{Equation-3.3}
   \sum_{i=1}^k (\Gamma_{k+1}-\Gamma_i)^2 \int_\Omega u_i^2\tr (T)dm\leq& 4B\sum_{i=1}^k(\Gamma_{k+1}-\Gamma_i)^2\Big(q_i - \frac{1}{2}\int_\Omega u_i \mathscr{L}u_i \tr{(T)}dm\Big)\nonumber\\
   &+\frac{1}{B} \sum_{i=1}^k (\Gamma_{k+1}-\Gamma_i)q_i,
\end{align}
for any positive constant $B$, where
\begin{align*}
  q_i=&\|T (\nabla u_i)\|^2_{L^2} +\frac{n^2}{4}\int_{\Omega}u_i^2|{\bf H}_T|^2dm +\frac{1}{4}\int_\Omega u_i^2 |\tr(\nabla T)|^2 dm\nonumber\\
   & +\int_\Omega u_i^2\Big\{\frac{1}{2}\dv \Big[T\Big(T(\nabla \eta) - \tr(\nabla T)\Big)\Big] - \frac{1}{4}|T(\nabla \eta )|^2\Big\}dm,
\end{align*}
and ${\bf H}_T$ is the generalized mean curvature vector of the immersion.
\end{lem}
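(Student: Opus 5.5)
The plan is to run the Rayleigh–Ritz argument of \cite[Lemma~3.2]{AF} with the coordinate functions of the ambient space $\mathbb{R}^m$ as test multipliers. Only the last step changes: the first-order remainder is rewritten exactly as in Lemma~\ref{lemma2.3}.

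\textbf{Test functions and the basic trial inequality.} Let $x_1,\ldots,x_m$ be the coordinate functions of $\mathbb{R}^m$ restricted to $M$. For each $\beta$ set
\[
\phi_{\beta i}=x_\beta u_i-\sum_{j=1}^k a_{\beta ij}u_j,\qquad a_{\beta ij}=\int_\Omega x_\beta u_iu_j\,dm .
\]
Then $\phi_{\beta i}$ satisfies the clamped boundary conditions, since $u_i=\partial u_i/\partial\nu_T=0$ on $\partial\Omega$, and it is orthogonal to $u_1,\ldots,u_k$. The Rayleigh principle gives
\[
\Gamma_{k+1}\|\phi_{\beta i}\|^2\le\int_\Omega\phi_{\beta i}\mathscr{L}^2\phi_{\beta i}\,dm .
\]
Expanding $\mathscr{L}^2(x_\beta u_i)$ twice with the product rule for $\mathscr{L}$, and using the symmetry $b_{\beta ij}=-b_{\beta ji}$ of the mixed terms, we get the standard estimate
\[
(\Gamma_{k+1}-\Gamma_i)^2\int_\Omega u_i^2 T(\nabla x_\beta,\nabla x_\beta)\,dm
\le B(\Gamma_{k+1}-\Gamma_i)^3\|\phi_{\beta i}\|^2+\frac1B(\Gamma_{k+1}-\Gamma_i)\Big\|\tfrac12u_i\mathscr{L}x_\beta+T(\nabla x_\beta,\nabla u_i)\Big\|^2 .
\]
This uses Cauchy–Schwarz with a free constant $B$. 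Summing over $i\le k$ and $\beta\le m$, the orthogonality terms cancel.

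\textbf{Ambient identities.} Summing over $\beta$ gives $\sum_\beta T(\nabla x_\beta,\nabla x_\beta)=\tr(T)$. This is the left-hand side of~\eqref{Equation-3.3}.

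The quantity $\sum_\beta\big(\Gamma_{k+1}-\Gamma_i\big)\|\phi_{\beta i}\|^2$ is then controlled as in \cite{AF} by the energy of $x_\beta u_i$. That energy is $\int(\mathscr{L}(x_\beta u_i))^2$ minus lower-order terms, and it produces
\[
4\Big(q_i-\tfrac12\int_\Omega u_i\mathscr{L}u_i\tr(T)\,dm\Big).
\]

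For the second term, the Euclidean identities are
\[
\sum_\beta(\mathscr{L}x_\beta)^2=\big|n{\bf H}_T+\tr(\nabla T)-T(\nabla\eta)\big|^2 ,
\]
whose normal and tangential parts are orthogonal, and
\[
\sum_\beta\big(T(\nabla x_\beta,\nabla u)\big)^2=|T(\nabla u)|^2 .
\]
Here we use $\mathscr{L}x=n{\bf H}_T+\tr(\nabla T)-T(\nabla\eta)$ as a vector in $\mathbb{R}^m$, and the cross term $\sum_\beta\mathscr{L}x_\beta\,T(\nabla x_\beta,\nabla u_i)$ only sees the tangential part. This gives
\[
\sum_\beta\Big\|\tfrac12u_i\mathscr{L}x_\beta+T(\nabla x_\beta,\nabla u_i)\Big\|^2
=\frac{n^2}{4}\int u_i^2|{\bf H}_T|^2+\frac14\int u_i^2|\tr\nabla T-T\nabla\eta|^2+\int u_i\big\langle\tr\nabla T-T\nabla\eta,\,T\nabla u_i\big\rangle+\|T\nabla u_i\|^2 .
\]

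\textbf{Rewriting the remainder.} This is exactly the expression handled in Lemma~\ref{lemma2.3}, with scalar $u_i$ in place of ${\bf u}_i$. Applying the integration by parts \eqref{Eq_3.2}, with $u_i|_{\partial\Omega}=0$, converts it into the form of $q_i$, including the divergence term $\tfrac12\dv[T(T\nabla\eta-\tr\nabla T)]-\tfrac14|T\nabla\eta|^2$. The same rewriting applies to the $B$-term. This yields~\eqref{Equation-3.3}.

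\textbf{Main obstacle.} The delicate step is the $B$-term: bounding $\sum_\beta\|\phi_{\beta i}\|^2$-type quantities by $q_i-\tfrac12\int u_i\mathscr{L}u_i\tr T$. It requires the computation
\[
\int(\mathscr{L}(x_\beta u_i))^2-\Gamma_i\int x_\beta^2u_i^2 ,
\]
summed over $\beta$. I would take that computation directly from \cite[Lemma~3.2]{AF}, changing only its final grouping of first-order terms as in Lemma~\ref{lemma2.3}, rather than redoing it.
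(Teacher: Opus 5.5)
Your proposal is correct and follows the paper's proof. Both take the Rayleigh--Ritz inequality of \cite[Lemma~3.2]{AF} as given and only regroup the first-order part of $q_i$, in both the $B$-term and the $1/B$-term: you integrate $\int_\Omega u_i\langle T(\tr(\nabla T)),\nabla u_i\rangle\, dm$ and $\int_\Omega u_i\langle T^2(\nabla\eta),\nabla u_i\rangle\, dm$ by parts using $u_i|_{\partial\Omega}=0$, then write $\dv_\eta=\dv-\langle\nabla\eta,\cdot\rangle$ so that the terms $\langle\tr(\nabla T),T(\nabla\eta)\rangle$ cancel. One small imprecision is in your sketch of the \cite{AF} step: the orthogonality cross terms do not cancel exactly after summation. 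Instead, via the same constant $B$ and the antisymmetry of the mixed coefficients, they combine into a nonpositive quantity that is then discarded.
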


\begin{proof} 
From Lemma 3.2 in \cite{AF} we have
\begin{align*}
   \sum_{i=1}^k (\Gamma_{k+1}-\Gamma_i)^2 \int_\Omega u_i^2\tr(T)dm\leq& 4B\sum_{i=1}^k(\Gamma_{k+1}-\Gamma_i)^2\Big(q_i - \frac{1}{2}\int_\Omega u_i \mathscr{L}u_i \tr{(T)}dm\Big)\nonumber\\
   &+\frac{1}{B} \sum_{i=1}^k (\Gamma_{k+1}-\Gamma_i)q_i,
\end{align*}
where
\begin{align}\label{Equation-qi_2}
  q_i=&\|T (\nabla u_i)\|^2_{L^2} +\frac{n^2}{4}\int_{\Omega}u_i^2|{\bf H}_T|^2dm+\int_\Omega u_i^2\Big(\frac{1}{4}|T(\nabla \eta)|^2+\frac{1}{2}\dv_\eta (T^2(\nabla \eta)) \Big)dm\nonumber\\
   & + \int_\Omega u_i T(\tr(\nabla T), \nabla u_i)dm +\frac{1}{4}\int_\Omega u_i^2\langle\tr(\nabla T), \tr(\nabla T) - 2T(\nabla \eta)\rangle dm.
\end{align}

Since $u_i|_{\partial \Omega}=\frac{\partial u_i}{\partial \nu_{T}}|_{\partial \Omega}=0$ from divergence theorem, we have
\begin{align}\label{Q1}
    &\int_\Omega u_i T(\tr(\nabla T),\nabla u_i) dm = \int_\Omega u_i \langle T(\tr(\nabla T)), \nabla u_i\rangle dm \nonumber\\
   &= \frac{1}{2}\int_\Omega \langle T(\tr(\nabla T)), \nabla u_i^2\rangle dm =-\frac{1}{2}\int_\Omega u_i^2 \dv_\eta (T(\tr(\nabla T)))dm\nonumber\\
   &=-\frac{1}{2}\int_\Omega u_i^2 \dv (T(\tr(\nabla T)))dm + \frac{1}{2}\int_\Omega u_i^2 \langle \tr (\nabla T), T(\nabla \eta )\rangle dm,
\end{align}
and
\begin{align}\label{Q2}
    \int_\Omega u_i^2\Big(\frac{1}{4}|T(\nabla \eta)|^2+&\frac{1}{2}\dv_\eta (T^2(\nabla \eta)) \Big)dm \nonumber\\
    &= \int_\Omega u_i^2\Big(\frac{1}{4}|T(\nabla \eta)|^2+\frac{1}{2}\dv (T^2(\nabla \eta)) - \frac{1}{2}|T(\nabla \eta )|^2\Big)dm\nonumber\\
    &=\int_\Omega u_i^2\Big(\frac{1}{2}\dv (T^2(\nabla \eta)) - \frac{1}{4}|T(\nabla \eta )|^2\Big)dm.
\end{align}

Substituting \eqref{Q1} and \eqref{Q2} into \eqref{Equation-qi_2} we have

\begin{align*}
  q_i=&\|T (\nabla u_i)\|^2_{L^2} +\frac{n^2}{4}\int_{\Omega}u_i^2|{\bf H}_T|^2dm+\int_\Omega u_i^2\Big(\frac{1}{2}\dv (T^2(\nabla \eta)) - \frac{1}{4}|T(\nabla \eta )|^2\Big)dm\nonumber\\
   & -\frac{1}{2}\int_\Omega u_i^2 \dv (T(\tr(\nabla T))dm + \frac{1}{2}\int_\Omega u_i^2 \langle \tr (\nabla T), T(\nabla \eta )\rangle dm\nonumber\\
   &+\frac{1}{4}\int_\Omega u_i^2 |\tr(\nabla T)|^2 dm - \frac{1}{2}\int_\Omega u_i^2\langle\tr(\nabla T), T(\nabla \eta)\rangle dm,
\end{align*}
that is, 
\begin{align*}
  q_i=&\|T (\nabla u_i)\|^2_{L^2} +\frac{n^2}{4}\int_{\Omega}u_i^2|{\bf H}_T|^2dm\nonumber\\
   &+\int_\Omega u_i^2\Big\{\frac{1}{2}\dv \Big[T\Big(T(\nabla \eta) - \tr(\nabla T)\Big)\Big] - \frac{1}{4}|T(\nabla \eta )|^2\Big\}dm +\frac{1}{4}\int_\Omega u_i^2 |\tr(\nabla T)|^2 dm,
\end{align*}
which is sufficient to complete the proof of Lemma~\ref{lemma_2}. 
\end{proof}

\begin{proof}[\bf Proof of Theorem\ref{theorem1.3}]
The proof of the first inequality is a consequence of Lemma~\ref{lemma_2}. Let us denote 
\begin{equation*}
    C_0=\sup_\Omega \left\{\frac{1}{2}\dv \Big( T\big(T(\nabla \eta)-\tr(\nabla T)\big)\Big) - \frac{1}{4}|T(\nabla \eta)|^2\right\},
\end{equation*}
$T_0=\sup_{\Omega}|\tr(\nabla T)|$ and $H_0=\sup_\Omega |{\bf H}_T|$ so that 
\begin{equation}\label{4.5}
    \frac{n^2}{4}\int_\Omega u_i^2|{\bf H}_T|^2dm \leq \frac{n^2H_0^2}{4}\int_\Omega u_i^2dm = \frac{n^2H_0^2}{4},
\end{equation}
\begin{align}
    \int_\Omega u_i^2\Big\{\frac{1}{2}\dv \Big[T\Big(T(\nabla \eta) - \tr(\nabla T)\Big)\Big] - \frac{1}{4}|T(\nabla \eta )|^2\Big\}dm \leq C_0
\end{align}
and
\begin{align}\label{t0}
    \frac{1}{4}\int_\Omega u_i^2 |\tr(\nabla T)|^2 dm \leq \frac{T_0^2}{4}.
\end{align}
From \eqref{4.5}-\eqref{t0}, we get
\begin{align}\label{Equation-4.7}
    q_i &\leq \|T(\nabla u_i)\|_{L^2}^2 + \frac{n^2H_0^2+4C_0+ T_0^2}{4}.
\end{align}

 Since there exist positive real numbers $\varepsilon$ and $\delta$ such that $\varepsilon I \leq T \leq \delta I$, consequently $n\varepsilon \leq \tr{(T)} \leq n\delta$, hence
\begin{align}\label{eq_6.6}
    n\varepsilon = n\varepsilon \int_\Omega u_i^2dm \leq \int_\Omega \tr{(T)}u_i^2 dm,
\end{align}
and using \eqref{lambda_i}, we have
\begin{align}\label{Equation-4.6}
    -\frac{1}{2}\int_\Omega u_i \mathscr{L}u_i \tr{(T)} dm &\leq \frac{1}{2} \Bigg(\int_\Omega u_i^2 dm \Bigg)^{\frac{1}{2}}\Bigg(\int_\Omega (\tr{(T))^2(\mathscr{L}u_i)^2}dm\Bigg)^{\frac{1}{2}}\nonumber\\
    &\leq \frac{n\delta}{2}\Bigg(\int_\Omega (\mathscr{L}u_i)^2dm\Bigg)^{\frac{1}{2}} = \frac{n\delta}{2}\Gamma_i^{\frac{1}{2}}.
\end{align}
Moreover, using \eqref{lambda_i} once again, notice that
\begin{align*}
    \|T(\nabla u_i)\|_{L^2}^2 = \int_\Omega \langle T(\nabla u_i), T (\nabla u_i) \rangle dm \leq \delta \int_\Omega T(\nabla u_i, \nabla u_i) = - \delta \int_\Omega u_i \mathscr{L}u_i dm,
\end{align*}
therefore, we have
\begin{align}\label{Equation(4.8)}
    \|T(\nabla u_i)\|_{L^2}^2\leq - \delta \int_\Omega u_i \mathscr{L}u_i dm \leq \delta \Bigg( \int_\Omega u_i^2 dm\Bigg)^{\frac{1}{2}}\Bigg( \int_\Omega (\mathscr{L}u_i)^2 dm\Bigg)^{\frac{1}{2}} = \delta \Gamma_i^{\frac{1}{2}}.
\end{align}
Thus, from \eqref{Equation-4.7} and \eqref{Equation(4.8)} we obtain
\begin{align}\label{Equation-4.8}
    q_i &\leq \delta \Gamma_i^{\frac{1}{2}}  + \frac{n^2H_0^2+4C_0+ T_0^2}{4}.
\end{align}
Substituting \eqref{eq_6.6}, \eqref{Equation-4.6} and \eqref{Equation-4.8} into \eqref{Equation-3.3}, we have
\begin{align*}
     n\varepsilon\sum_{i=1}^k(\Gamma_{k+1}-\Gamma_i)^2 \leq& 4B\sum_{i=1}^k(\Gamma_{k+1}-\Gamma_i)^2\Big( \frac{n+2}{2}\delta\Gamma_i^{\frac{1}{2}} +  \frac{n^2H_0^2+4C_0+T_0^2}{4}\Big)\nonumber\\
     & + \frac{1}{B}\sum_{i=1}^k(\Gamma_{k+1}-\Gamma_i)\Big(\delta \Gamma_i^{\frac{1}{2}} + \frac{n^2H_0^2+4C_0+T_0^2}{4}\Big).
\end{align*}
Finally, taking
\begin{align*}
    B=\frac{\Big\{\sum_{i=1}^k(\Gamma_{k+1}-\Gamma_i)\Big(\delta \Gamma_i^{\frac{1}{2}} + \frac{n^2H_0^2+4C_0+T_0^2}{4}\Big)\Big\}^{\frac{1}{2}}}{\Big\{\sum_{i=1}^k(\Gamma_{k+1}-\Gamma_i)^2\Big( \frac{n+2}{2}\delta\Gamma_i^{\frac{1}{2}} +  \frac{n^2H_0^2+4C_0+T_0^2}{4}\Big)\Big\}^{\frac{1}{2}}},
\end{align*}
into previous inequality, we complete the proof of Inequality~\eqref{theorem1-estimate1}.

The proof of the second part of the theorem is a slight modification of the proof of \cite[Inequality (1.10)]{AF}. In fact, analogously to what we did in the Lemma~\ref{lemma_2}, we just need to make the following modification in \cite[Inequality~(4.12)]{AF}
\begin{align*}
\int_\Omega\Big(u_1^2 \big(n^2 |{\bf H}_T|^2& + |\tr{(\nabla T)}|^2 - 2 \langle \tr{(\nabla T)}, T(\nabla \eta)\rangle + |T(\nabla \eta)|^2) \nonumber\\
&+ 4u_1\big(T(\tr{(\nabla T)}, \nabla u_1) - T(T(\nabla \eta), \nabla u_1)\big) + 4|T(\nabla u_1)|^2\Big)dm\nonumber\\
\leq &4\|T (\nabla u_1)\|^2_{L^2} +n^2\int_{\Omega}u_1^2|{\bf H}_T|^2dm +\int_\Omega u_1^2 |\tr(\nabla T)|^2 dm\nonumber\\
   & +4\int_\Omega u_1^2\Big\{\frac{1}{2}\dv \Big[T\Big(T(\nabla \eta) - \tr(\nabla T)\Big)\Big] - \frac{1}{4}|T(\nabla \eta )|^2\Big\}dm \nonumber\\
\leq& 4\|T(\nabla u_1)\|_{L^2}^2 + n^2H_0^2 + 4C_0 + T_0^2,
\end{align*}
To complete the proof, simply follow the rest of the proof by replacing the \cite[Inequality~(4.12)]{AF} by the previous Inequality.
\end{proof}

To prove Corollary~\ref{cor_1.3}, we needed an algebraic lemma obtained by Jost et al.
\begin{lem}{~\cite[Lemma~2.3]{Jost}}\label{jost}
Let $\{a_i\}_{i=1}^m$, $\{b_i\}_{i=1}^m$ and $\{c_i\}_{i=1}^m$ be three sequences of non-negative real with $\{a_i\}_{i=1}^m$ decreasing and $\{b_i\}_{i=1}^m$ and $\{c_i\}_{i=1}^m$ increasing. Then we have
\begin{equation*}
    \Big(\sum_{i=1}^m a_i^2b_i \Big)\Big(\sum_{i=1}^m a_ic_i \Big)\leq \Big(\sum_{i=1}^m a_i^2\Big)\Big(\sum_{i=1}^m a_ib_ic_i\Big).
\end{equation*}
\end{lem}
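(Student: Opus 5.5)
We have to prove the algebraic lemma of Jost et al. (Lemma~\ref{jost}): for non-negative $\{a_i\}$ decreasing and $\{b_i\}$, $\{c_i\}$ increasing,
\[
\Big(\sum_i a_i^2b_i\Big)\Big(\sum_j a_jc_j\Big)\leq \Big(\sum_i a_i^2\Big)\Big(\sum_j a_jb_jc_j\Big).
\]
The plan is to subtract the left side from the right side, write the difference as a symmetric double sum, and show that every symmetrized term is non-negative. Expanding the products gives
\[
\Delta:=\sum_{i,j=1}^m a_i^2a_j\,(b_jc_j-b_ic_j)=\sum_{i,j=1}^m a_i^2a_jc_j\,(b_j-b_i).
\]

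Next we symmetrize by swapping $i$ and $j$ and averaging:
\[
2\Delta=\sum_{i,j}(b_j-b_i)\big(a_i^2a_jc_j-a_j^2a_ic_i\big)=\sum_{i,j}a_ia_j\,(b_j-b_i)\,(a_ic_j-a_jc_i).
\]
It remains to check the sign of each term. Take $j>i$, so $b_j-b_i\ge 0$.

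Since $\{a_i\}$ is decreasing, $a_i\ge a_j\ge 0$. Since $\{c_i\}$ is increasing, $c_j\ge c_i\ge 0$. Therefore $a_ic_j\ge a_jc_i$, so $a_ic_j-a_jc_i\ge 0$, and the factor $a_ia_j$ is non-negative.

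Hence every term with $j>i$ is non-negative. The $(j,i)$ term is identical to the $(i,j)$ term, because both differences change sign under the swap. The diagonal terms vanish. So $2\Delta\ge 0$, which is the claimed inequality.

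The only delicate point is the symmetrization step: the factor $a_ia_j$ must be pulled out correctly from $a_i^2a_jc_j-a_j^2a_ic_i=a_ia_j(a_ic_j-a_jc_i)$. Once that is done, non-negativity of all the sequences makes the sign check immediate.
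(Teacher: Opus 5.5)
Your proof is correct: the expansion $\Delta=\sum_{i,j}a_i^2a_jc_j(b_j-b_i)$, the symmetrization via $a_i^2a_jc_j-a_j^2a_ic_i=a_ia_j(a_ic_j-a_jc_i)$, and the sign check for $j>i$ (from $a_ic_j\ge a_jc_j\ge a_jc_i$) all hold, and the argument never actually uses $b_i\ge 0$. The paper gives no proof of its own and simply quotes the lemma from Jost et al., so there is no in-paper argument to compare against; your symmetrized double-sum argument settles it completely.
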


\begin{proof}[\bf Proof of Corollary~\ref{cor_1.3}]
Since $\{\Lambda_i=(\Gamma_{k+1} - \Gamma_i)\}_{i=1}^k$ is decreasing and $\{(2n+4)\delta\Gamma_i^{\frac{1}{2}} + n^2H_0^2+T_0^2+4C_0\}_{i=1}^k$ and $\{4\delta \Gamma_i^{\frac{1}{2}}+ n^2H_0^2+T_0^2+4C_0\}_{i=1}^k$ are increasing, we obtain from the previous lemma that
\begin{align}\label{Equation-1.5}
    \Bigg\{\sum_{i=1}^k&\Lambda_i^2\Big[(2n+4)\delta\Gamma_i^{\frac{1}{2}} + n^2H_0^2+T_0^2+4C_0 \Big]\Bigg\}\Bigg\{\sum_{i=1}^k\Lambda_i\Big(4\delta\Gamma_i^{\frac{1}{2}} + n^2H_0^2+T_0^2+4C_0 \Big)\Bigg\}\nonumber\\
    \leq&\Bigg\{\sum_{i=1}^k\Lambda_i\Big[(2n+4)\delta\Gamma_i^{\frac{1}{2}} + n^2H_0^2+T_0^2+4C_0 \Big]\Big(4\delta\Gamma_i^{\frac{1}{2}} + n^2H_0^2+T_0^2+4C_0 \Big)\Bigg\}\nonumber\\
    &\times  \Bigg(\sum_{i=1}^k\Lambda_i^2\Bigg).
\end{align}
Substituting \eqref{Equation-1.5} into \eqref{theorem1-estimate1}, we complete the proof of the corollary.     
\end{proof}

\begin{proof}[\bf Proof of Corollary~\ref{cor_1.4}]
Since our Inequality~\eqref{Equation-1.6} is a quadratic inequality of $\Gamma_{k+1}$, it is not difficult to obtain \eqref{Equation-1.7}, that is,
\begin{equation}\label{Equation-1.7-2}
    \Gamma_{k+1} \geq A_k + \sqrt{A_k^2-B_k}.
\end{equation}
Now, notice that Inequality~\eqref{Equation-1.6} also holds if we replace the integer $k$ with $k-1$, that is, we have
\begin{align*}
    &\sum_{i=1}^{k-1}(\Gamma_{k}-\Gamma_i)^2\nonumber \\
    &\leq\frac{1}{n^2\varepsilon^2}\sum_{i=1}^{k-1}(\Gamma_{k}-\Gamma_i)\Big[(4\delta^2+2n\delta)\Gamma_i^{\frac{1}{2}} + n^2H_0^2+4\delta^2C_0 \Big]\Big(4\delta^2 \Gamma_i^{\frac{1}{2}}+ n^2H_0^2+4\delta^2C_0 \Big),
\end{align*}
hence, we infer
\begin{align*}
    &\sum_{i=1}^{k}(\Gamma_{k}-\Gamma_i)^2\nonumber \\
    &\leq\frac{1}{n^2\varepsilon^2}\sum_{i=1}^{k}(\Gamma_{k}-\Gamma_i)\Big[(4\delta^2+2n\delta)\Gamma_i^{\frac{1}{2}} + n^2H_0^2+4\delta^2C_0 \Big]\Big(4\delta^2 \Gamma_i^{\frac{1}{2}}+ n^2H_0^2+4\delta^2C_0 \Big).
\end{align*}
Therefore, $\Gamma_k$ also satisfies the same quadratic inequality and we obtain
\begin{equation*}
    \Gamma_k \geq A_k - \sqrt{A_k^2-B_k}.
\end{equation*}
Thus, from the previous inequality and \eqref{Equation-1.7-2} we get \eqref{Equation-1.8} and complete the proof of Corollary~\ref{cor_1.4}.
\end{proof}

\section*{Statements and Declarations}

{\bf Data availability statement:} the manuscript has no associated data.

{\bf Funding statement:} not applicable.

{\bf Ethics and Consent to Participate declarations:} not applicable.

{\bf Conflict of interest statement:} the authors state that there is no conflict of
interest.

\section*{Acknowledgements}

The authors have been partially supported by Fundação de Amparo à Pesquisa do Estado do Amazonas (FAPEAM) and Coordenação de Aperfeiçoamento de Pessoal de Nível Superior (CAPES).

\end{document}